\newcommand{\rh}{\frac{r}{2}}
\newtheorem{thm}{Theorem}[section]
\newtheorem{pro}[thm]{Proposition}
\newtheorem{lem}[thm]{Lemma}
\newtheorem{cla}[thm]{Claim}
\newtheorem{cor}[thm]{Corollary}
\theoremstyle{definition}
\newtheorem{obs}[thm]{Observation}
\newtheorem{pres}[thm]{Presentation}
\newtheorem{rem}[thm]{Remark}
\newtheorem{exa}[thm]{Example}
\newtheorem{defn}[thm]{Definition}
\newtheorem{prop}[thm]{Proposition}
\newtheorem{conj}[thm]{Conjecture}
\newcommand{\sumlim}{\sum\limits}
\newcommand{\prodlim}{\prod\limits}
\newcommand{\een}{\end{enumerate}}
\newcommand{\blem}{\begin{lem}}
\newcommand{\elem}{\end{lem}}
\newcommand{\bcl}{\begin{cla}}
\newcommand{\ecl}{\end{cla}}
\newcommand{\ethm}{\end{thm}}
\newcommand{\bpr}{\begin{pro}}
\newcommand{\epr}{\end{pro}}
\newcommand{\bco}{\begin{cor}}
\newcommand{\eco}{\end{cor}}
\newcommand{\bcon}{\begin{conj}}
\newcommand{\econ}{\end{conj}}
\newcommand{\bde}{\begin{defn}}
\newcommand{\ede}{\end{defn}}
\newcommand{\bex}{\begin{exa}}
\newcommand{\eexa}{\end{exa}}
\newcommand{\bobs}{\begin{obs}}
\newcommand{\eobs}{\end{obs}}
\newcommand{\bexe}{\begin{exe}}
\newcommand{\eexe}{\end{exe}}
\newcommand{\Z}{{\Bbb Z}}
\newcommand{\grn}{G_{r,n}}
\newcommand{\arn}{A_{r,n}}
\begin{document}
\title[Group of alternating colored permutations]{On the group of alternating colored permutations}
\author{Eli Bagno, David Garber and Toufik Mansour}

\address{Eli Bagno, The Jerusalem College of Technology, Jerusalem, Israel}
\email{bagnoe@jct.ac.il}

\address{David Garber, Department of Applied Mathematics, Faculty of
  Sciences, Holon Institute of Technology, 52 Golomb St., PO
  Box 305, 58102 Holon, Israel}
\email{garber@hit.ac.il}

\address{Toufik Mansour, Department of Mathematics, University of Haifa, 31905 Haifa,
Israel.}
\email{toufik@math.haifa.ac.il}

\begin{abstract}
The group of alternating colored permutations is the natural analogue of the classical alternating group, inside the wreath product $\mathbb{Z}_r \wr S_n$.
We present a 'Coxeter-like' presentation for this group and compute the length function with respect to that presentation.
Then, we present this group as a covering of $\mathbb{Z}_{\rh} \wr S_n$ and use this point of view to give another expression for the length function. We also use this covering to lift several known parameters of $\mathbb{Z}_{\rh} \wr S_n$ to the group of alternating colored permutations.
\end{abstract}

\date{\today}

\maketitle

\section{Introduction}

The group of colored permutations, $\grn$, is a natural generalization of the Coxeter groups of types A (the symmetric group) and B (the hyperoctahedral group). Extensive research has been devoted to extending the enumerative combinatorics aspects and methods from the symmetric group to the group of colored permutations (see for example \cite{B,BG,DF,NR,RR2,S}, and many more).

It is well-known that the symmetric group $S_n$ has a system of Coxeter generators which consists of the adjacent transpositions: 
$$\{(i,i+1)\mid 1 \leq i \leq n-1\}.$$

The alternating subgroup, $A_n$, which is the kernel of the sign homomorphism, is a well-known subgroup of the symmetric group of index $2$. A pioneering work, expanding one of the fascinating branches of enumerative combinatorics, namely, the study of permutation statistics to $A_n$, has been done by Roichman and Regev in \cite{RR}. They defined some natural statistics which equidistribute over $A_n$ and yielded identities for their generating functions.

Brenti, Reiner, and Roichman \cite{BRR} dealt with the alternating subgroup of an arbitrary Coxeter group.  They started by exploring Bourbaki's presentation \cite[Chap. IV, Sec. 1, Exer. 9]{bourbaki} and elaborated on a huge spectrum of extensions of the permutation statistics of $S_n$ to the (general) alternating group.

\medskip

In this paper, we study the subgroup of $\grn$, consisting of what we call {\em alternating colored permutations}, which is the analogue of the usual alternating group $A_n$ in the colored permutation group.  For every $n \in \mathbb{N}$ and even $r$, the mapping which sends all 'Coxeter-like' generators of $\grn$ (see the definition in Section \ref{pre}) to $-1$ is a $\mathbb{Z}_2$-character, whose kernel is what we call here {\it the group of alternating colored permutations}, denoted by $A_{r,n}$.
We present here a generalization of Bourbaki's  presentation, for $r=4k+2$, equipped with a set of canonical words, an algorithm to find a canonical presentation for each element of the group, and a combinatorial length function.

For the study of permutation statistics of $A_n$, Regev and Roichman \cite{RR} used a covering map from $A_{n+1}$ to $S_n$, which enabled them to pass parameters from $S_n$ to the alternating group $A_{n+1}$. In this paper, we use a similar idea, where in this time we consider the group of alternating colored permutations as a $2^{n-1}$-cover of the group of colored permutations of half the number of colors. We use this technique to shed a combinatorial flavor on
our length function and to pass some statistics and their generating function to the group of alternating colored permutations.

Note that there are two additional candidates for the group of alternating colored permutations. Namely, every $\mathbb{Z}_2$-character of $\grn$ provides a kernel which deserves to be called a group of alternating colored permutations.
A work in this direction which gives a profound treatment to the other two non-trivial kernels, and points out the connections between the three groups, and some interesting properties of each group separately is in progress.

\medskip

This paper is organized as follows. In Section \ref{pre}, we gather the needed definitions on the colored permutation group, as well as some notations which we use in the sequel. A Coxeter-like presentation for the group of colored permutations, $\grn$, is presented  at the end of this section. The notion of alternating colored permutations is introduced in Section \ref{arn}. We present its set of generators, and show their corresponding relations. In Section \ref{alg}, we present an algorithm for writing each element as a product of the generators. A detailed analysis of that algorithm yields a set of canonical words, as well as a length function.
Section \ref{proofs} is devoted to some technical proofs, as well as to the generating function of the length function.

In Section \ref{cover}, we present the covering map and study the structure of the cosets, thereby providing a way to decompose the length function via the quotient group. The part of the length which varies over each coset (fiber) is called {\it the fibral length} and is studied here in a combinatorial way. Then we provide a generating function for this parameter. In Section \ref{perm stat}, we give some examples for using the covering map for lifting parameters from the colored permutations group of half the number of colors to the group of alternating colored permutations.

\section{Preliminaries and notations}\label{pre}
In this section, we gathered some notations as well as preliminary notions which will be needed for the rest of the paper.

\subsection{The group of colored permutations}
\bde
Let $r$ and $n$ be positive integers. {\it The group of colored
permutations of $n$ digits with $r$ colors} is the wreath product
$$\grn=\mathbb{Z}_r \wr S_n=\mathbb{Z}_r^n \rtimes S_n,$$
consisting of all pairs $(\vec{z},\tau)$, where $\vec{z}$ is an $n$-tuple of
integers between $0$ and $r-1$ and $\tau \in S_n$. The
multiplication is defined by the following rule: for
$\vec{z}=(z_1,\dots,z_n)$ and $\vec{z'}=(z'_1,\dots,z'_n)$,
\begin{equation}
(\vec{z},\tau) \cdot (\vec{z'},\tau')=((z_1+z'_{\tau^{-1}(1)},\dots,z_n+z'_{\tau^{-1}(n)}),\tau \circ \tau')
\end{equation}
(here $+$ is taken modulo $r$).
\ede

Here is another way to present $\grn$: Consider the alphabet
$$\Sigma=\{1,\dots,n,\bar{1},\dots,\bar{n},\dots,
1^{[r-1]},\dots,n^{[r-1]} \}$$ as the set $[n]$ colored by the
colors $0,\dots,r-1$. Then, an element of $\grn$ is a {\it colored
permutation}, i.e., a bijection $\pi: \Sigma \rightarrow \Sigma$
satisfying the following condition: if
$\pi \left( i^{[\alpha]} \right)=j^{[\beta]}$, then
$\pi \left( i^{[\alpha+1]} \right)=j^{[\beta+1]}$  (the addition in the exponents is taken modulo $r$). Using this approach, the element
$\pi=((z_1,\dots,z_n),\tau) \in \grn$ is the permutation on $\Sigma$, satisfying  $\pi(i)=\pi(i^{[0]})=\tau(i)^{[z_{\tau(i)}]}$ for each $1 \leq i \leq n$.
For example, the element $\pi=\left((2,1,0,3),\begin{pmatrix} 1 & 2 & 3 &4 \\
2 & 1 & 4 &3
\end{pmatrix}\right) \in G_{3,4}$ satisfies: $\pi(1)=2^{[1]},\pi(2)=1^{[2]},\pi(3)=4^{[3]},\pi(4)=3^{[0]}$.

For an element $\pi=(\vec{z},\tau) \in \grn$ with $\vec{z}=(z_1,\dots,z_n)$, we
write $z_i(\pi)=z_i$, and  denote
$|\pi|=(\vec{0},\tau)$. We define also $c_i(\pi)=r-z_i(\pi^{-1})$ and $\vec{c}(\pi)=\vec{c}=(c_1,\dots,c_n)$. Using this notation, the element $\pi=(\vec{z},\tau)=\left((2,1,0,3),\begin{pmatrix} 1 & 2 & 3 &4 \\
2 & 1 & 4 &3
\end{pmatrix}\right)$ satisfies $\vec{c}=(1,2,3,0))$.

We usually write $\pi$ in its {\it window notation} (or {\it one line notation}):
$\pi=\left( a_1^{[c_1]} \cdots a_n^{[c_n]} \right)$, where $a_i=\tau(i)$, so in our example we have:
$\pi=(2^{[1]} 1^{[2]} 4^{[3]} 3^{[0]})$ or just  $\left(\bar{2} \bar{\bar{1}}
\bar{\bar{\bar{4}}} 3\right).$

Note that $z_i$ is the color of the digit $i$ ($i$ is taken from the window notation), while $c_j$ is the color of the digit $\tau(j)$.
Here, $j$ stands for the place, whence $i$ stands for the value.

The group $\grn$ is generated by the set of generators ${\mathcal S}=\{s_0,s_1,\dots, s_{n-1}\}$, defined by their action on the set $\{1,\dots,n\}$ as follows:
$$s_i(j)=\left\{ \begin{array}{ccc} i+1 & j=i \\
i & j=i+1 \\
j & \text{otherwise,}
\end{array}    \right.$$
whereas the generator $s_0$ is defined by
$$s_0(j)= \left\{ \begin{array}{cc}
\bar{1} & j=1 \\
j & \text{otherwise.}
\end{array}    \right.$$

It is easy to see that the group $\grn$ has the following 'Coxeter-like' presentation with respect to the set of generators ${\mathcal S}$:

\begin{pres}\label{presentation of grn}
\
\begin{itemize}
\item $s_0^r=1$,
\item $s_i^2=1 \mbox{ for }  1 \leq i \leq n-1$,
\item $s_i s_{i+1} s_i = s_{i+1} s_i s_{i+1} \mbox{ for } 1 \leq i <n$,
\item $s_i s_j= s_j s_i \mbox{ for } 1 \leq i <j<n, \ j-i>1$,
\item $(s_0 s_1)^{2r}=1.$
\end{itemize}
\end{pres}

\subsection{Some permutation statistics}

For $\pi \in \grn$, define the \emph{length} of $\pi$ with
respect to the set of generators $\mathcal S$ to be the minimal number of
generators whose product is $\pi$. Formally:
$$\ell(\pi)=\min\{r \in \mathbb{N}: \pi=s_{i_1} \cdots s_{i_r},
\mbox{for } i_1,\dots,i_r \in \{0,\dots,n-1\} \ \}.$$
\begin{defn}
The {\it length order} on the alphabet
$$\Sigma=\{1,\dots,n,\bar{1},\dots,\bar{n},\dots,1^{[r-1]},\dots,n^{[r-1]}\}$$
is defined as follows:

\begin{equation}
n^{[r-1]}< \cdots <\bar{n}<\cdots <1^{[r-1]}< \cdots <\bar{1}<1< \cdots <n
\end{equation}

Let $\sigma \in \grn$. We define:
$${\rm csum}(\sigma) = \sumlim_{i=1}^n c_i(\sigma)=\sumlim_{i=1}^n z_i(\sigma).$$
For $\pi \in G_{r,n}$, the {\it inversion number}, ${\rm inv}(\pi)$, is defined  as follows:
$${\rm inv}(\pi)=|\{(i,j) \mid i<j, \pi(i)>\pi(j) \} |.$$
where the partial order is the length order defined above.
\end{defn}

For any $a,n \in \mathbb{N}$, let $R_n(a)$ be the representative of $[a] \in \mathbb{Z}_n$ satisfying $0 \leq a <n$.

In the sequel, we will use the following operator:

\begin{defn}
 Let $a \in \mathbb{N}$.
\begin{equation} \label{oslash}
a\oslash 2=\left\{ \begin{array}{cc} R_{\rh}(\frac{a}{2}) & a \equiv 0 ({\rm mod}\ 2) \\
                                                                             R_{\rh}(\frac{a+\frac{r}{2}}{2}) &  a \not\equiv 0 ({\rm mod}\ 2)\\               \end{array}    \right.
\end{equation}
\end{defn}
It is easy to see that the operator $\oslash$ commutes with the addition operation in
$\mathbb{Z}_{\frac{r}{2}}$, i.e.
\begin{equation} \label{oslash commutes}
((a+b)\oslash 2) \equiv ((a\oslash 2) + (b \oslash 2)) \left({\rm mod}\ {\rh} \right)
\end{equation}

\section{The group of alternating colored permutations}\label{arn}

The main target of this paper is the group of alternating colored permutations. We proceed now to its definition. Let $\varphi$ be the function defined on the set $\mathcal{S}$ by
$\varphi(s_i)=-1$ for any  $0 \leq i \leq n-1$. It is easy to see
that for even $r$, $\varphi$ can be uniquely extended to a
homomorphism from $\grn$ to $\mathbb{Z}_2$, so the following is well-defined:

\bde
Let $r$ be an even positive number. Define:
$$A_{r,n}=\ker(\varphi).$$ The group $A_{r,n}$ is called the {\em alternating
subgroup of $G_{r,n}$}.
\ede

Since $A_{r,n}$ is a subgroup of index $2$, we have:
$|A_{r,n}|=\frac{r^n n!}{2}$.

\medskip

In this paper, we concentrate on the case $r=4k+2$. The other case
will be treated in a subsequent paper.

\medskip

We start by presenting a set of generators for $A_{r,n}$ (we prove that they indeed
generate the group in the next section). Define:
$${\mathcal A}=\{a_0,a_1,a_1^{-1},a_2,\dots,a_{n-1}\},$$
where:
$$\begin{array}{l}
a_i=s_0^{\frac{r}{2}}s_i \ \ \mbox{ for } 1 \leq i \leq n-1 \\ a_0=s_0^2.
\end{array}$$

It is easy to see that the following translation relations hold in $\grn$:
\begin{enumerate}
\item $s_i s_j = a_i a_j \mbox{ for }  i,j \in\{2,\dots, n-1\}$,
\item $s_1 s_i = a_1^{-1} a_i \mbox{ for } i \in\{2,\dots, n-1\}$,
\item $s_i s_1 = a_i a_1 \mbox{ for } i \geq 2$,
\item $s_0 s_1 = a_0^{\frac{r+2}{4}} a_1 $,
\item $s_1 s_0 = a_1^{-1} a_0^{\frac{r+2}{4}}$,
\item $a_0^{\frac{r}{2}} = 1 $,
\item $s_0 s_i = a_0^{\frac{r+2}{4}} a_i$.
\end{enumerate}

\section{The Combinatorial algorithm}\label{alg}
In this section, we introduce an algorithm which presents each element of $A_{r,n}$ as a product of
the set of generators $\mathcal A$ of $A_{r,n}$ in a canonical way.

Let $\pi \in A_{r,n}$. We first refer to $\pi$ as an element of $G_{r,n}$ and apply the known algorithm on $\pi$ to write it as a product of elements in $\mathcal{S}$. In the second step, we
translate that presentation into the set of generators $\mathcal A$ of $A_{r,n}$.

The algorithm for writing $\pi$ as a product of elements in $\mathcal{S}$ consists of two parts: the {\it coloring part} and the {\it ordering part}.

In the coloring part, we
start from the identity element and color all the digits $i$ having $z_i
\neq 0$. This part terminates with an ordered permutation $\sigma$
with respect to the length order. In the second part, we use only
generators of the set $\mathcal{S}-\{s_0\}$ to arrive at $\pi$ from the ordered permutation $\sigma$.

\subsection{The coloring part}\label{color order}
Define:
$${\rm Col}(\pi)=\{1 \leq i \leq n \mid z_i(\pi) \neq 0\},$$ and
$${\rm col}(\pi)=|{\rm Col}(\pi)|.$$
Note that the set ${\rm Col}(\pi)$ contains the colored digits in the image of $\pi$, (i.e. those appearing in the window notation), and not their places.
We order ${\rm Col}(\pi)$ as follows: ${\rm Col}(\pi)=\{i_1<i_2< \cdots
<i_{{\rm col}(\pi)}\}$.

We start with the identity element and color each digit $i \in {\rm Col}(\pi)$ by $z_i$ colors. This process is done according to the order of the elements in ${\rm Col}(\pi)$. We use $s_{{i_k}-1}s_{i_k-2} \cdots s_1s_0^z$ to color the digit $i_k$ by $z$ colors.

\begin{exa}
Let $\pi= \left( 1 2^{[2]} 4 5^{[1]} 3^{[3]} \right) \in G_{6,5}$.
$$ (1 2 3 4 5) \stackrel{s_1s_0^2}{\rightarrow}  \
\left( 2^{[2]} 1 3 4 5 \right)  \stackrel{s_2 s_1 s_0^3}{\rightarrow}  \ \left( 3^{[3]} 2^{[2]} 1 4 5 \right)
\stackrel{s_4s_3s_2s_1s_0}{\rightarrow}  \ \left( 5^{[1]} 3^{[3]} 2^{[2]} 1 4 \right) =\sigma.$$
The permutation $\sigma$ is an ordered permutation with respect to the length order.
\end{exa}

\subsection{The ordering part}
For simplifying the presentation, in this part we start with $\pi$ and arrive at the ordered permutation $\sigma$, instead of continuing the algorithm from the point we have left it at the end of the coloring part.

We start by pushing the element $i_1=|\sigma|(1)$ in the window notation of $\pi$ to its
correct place. Let $p=|\pi|^{-1}(i_1)$. The pushing is
done by multiplying $\pi$ (from the right) by the element $u_1=s_{p-1}s_{p-2}\cdots s_1$.

Now, we continue to push the other digits of $\pi$:  For each $1 < k \leq n-2$, assuming that $i_k=|\sigma|(k)$ is now located at position
$p$, we use the element $s_{p-1}s_{p-2}\cdots s_k$ in order to push the digit $i_k$ to its
correct place.

\begin{exa}
We continue the previous example. Again, let\break $\pi= \left( 1 2^{[2]} 4 5^{[1]} 3^{[3]} \right)$.
The coloring part ends with the following ordered permutation:
$$ \sigma=  \ \left( 5^{[1]} 3^{[3]} 2^{[2]} 1 4 \right). $$

Now, we go the other way around: we start with $\pi$ and order it until we reach
$\sigma$:
$$\pi= \left( 1 2^{[2]} 4 5^{[1]} 3^{[3]} \right) \stackrel{s_3s_2 s_1}{\rightarrow}
\left( 5^{[1]} 1 2^{[2]} 4 3^{[3]} \right)  \stackrel{s_4 s_3
s_2 }{\rightarrow} $$
$$ \rightarrow \left( 5^{[1]} 3^{[3]} 1 2^{[2]} 4 \right) \stackrel{s_3
}{\rightarrow}  \left( 5^{[1]} 3^{[3]} 2^{[2]} 1 4 \right) =\sigma.
$$

Therefore, we have:
$$\pi=\underbrace{s_1 s_0^2 \cdot s_2 s_1 s_0^3 \cdot s_4 s_3 s_2 s_1 s_0}_{\rm coloring\ part} \cdot \underbrace{s_3 \cdot s_2 s_3 s_4 \cdot s_1 s_2 s_3}_{\rm ordering\ part}.$$
\end{exa}

The algorithm described above gives a {\it reduced} word representing $\pi$ in the generators of $G_{r,n}$. This fact was proved in \cite[Theorem 4.3]{B}. The same algorithm can also be found in \cite{S}; see also \cite{RR2}. The word which was obtained in this way is called the {\it canonical decomposition} of $\pi$.

\subsection{Translation}\label{Translation}
Now, we translate the word obtained by the algorithm described above into a word in the generators in $\mathcal A$:
Let $\pi \in A_{r,n}$. Use the above algorithm to write a reduced expression of $\pi$ (in the usual generators of $\grn$) in the form:
$s_{i_1}s_{i_2} \cdots s_{i_{2k}}$.
Divide the elements of the reduced expression into pairs: $(s_{i_1}s_{i_2}) \cdots (s_{i_{2k-1}}s_{i_{2k}})$. Now, insert $s_0^{\rh} s_0^{\rh}$ inside each pair, as follows:
\begin{eqnarray*}
\left( s_{i_1}s_0^{\rh} s_0^{\rh}s_{i_2} \right) \cdots \left( s_{i_{2k-1}}s_0^{\rh} s_0^{\rh}s_{i_{2k}} \right) & = & \left( s_{i_1}s_0^{\rh} \right) \left( s_0^{\rh}s_{i_2} \right) \cdots \left( s_{i_{2k-1}}s_0^{\rh} \right)  \left( s_0^{\rh}s_{i_{2k}} \right) =\\
& = & a_{i_1}^{\varepsilon_{i_1}} \cdots a_{i_{2k}}^{\varepsilon_{i_{2k}}},
\end{eqnarray*}
where $\varepsilon_{i_j}=1$ if $i_j>1$, $\varepsilon_{i_j} \in \{\pm 1\}$ if $i_j=1$, and $\varepsilon_{i_j} \in \left\{ 1,\dots , \rh-1 \right\}$ if $i_j=0$.

\begin{exa}\label{pairing}
We continue with $\pi=\left( 1 2^{[2]} 4 5^{[1]} 3^{[3]} \right) \in A_{6,5}$ from the previous examples.
As we saw, $\pi=s_1 s_0^2 s_2 s_1 s_0^3 s_4 s_3 s_2 s_1 s_0 s_3 s_2 s_3 s_4 s_1 s_2 s_3$. Now, we perform the translation:
\begin{eqnarray*} \pi & = & (s_1 s_0) (s_0 s_2) (s_1 s_0) (s_0 s_0) (s_4 s_3) (s_2 s_1) (s_0 s_3) (s_2 s_3) (s_4 s_1) (s_2 s_3)=\\
                      & = & (s_1 s_0^3) (s_0^3 s_0) (s_0 s_0^3) (s_0^3 s_2) (s_1 s_0^3) (s_0^3 s_0) (s_0 s_0^3) (s_0^3s_0) (s_4s_0^3) (s_0^3 s_3) \cdot \\
                      & {} &  \cdot (s_2 s_0^3) (s_0^3 s_1) (s_0s_0^3) (s_0^3 s_3) (s_2s_0^3) (s_0^3 s_3) (s_4s_0^3) (s_0^3 s_1)(s_2 s_0^3) (s_0^3s_3)=\\
                      & = & a_1^{-1} \bold{a_0^2 a_0^2} a_2 a_1^{-1} \bold{a_0^2 a_0^2 a_0^2} a_4 a_3 a_2 a_1 a_0^2 a_3 a_2 a_3 a_4 a_1 a_2 a_3 =\\
                      & =& a_1^{-1} a_0 a_2 a_1^{-1}a_4 a_3 a_2 a_1 a_0^2 a_3 a_2 a_3 a_4 a_1 a_2 a_3.
\end{eqnarray*}
In the last equality, we cancelled some appearances of the bold-faced generator $a_0$, since in $A_{6,5}$, $a_0^3=1$.
\end{exa}

\subsection{Analysis of the algorithm}

For analyzing the algorithm described above, we define the following sets of elements of $\grn$ and $A_{r,n}$.

\subsubsection{The coloring part}
Let
$$C_1=\left\{ 1,s_0^2,s_0^4 \dots ,s_0^{r-2} \right\}=\left\{ 1,a_0,\dots,a_0^{\frac{r-2}{2}} \right\}.$$

For each $1 < i \leq n$, define for odd $i-1$:
\begin{eqnarray*}
C_i^0 & = & \left\{s_0s_{i-1} \cdots s_1,s_0s_{i-1} \cdots s_1s_0^2,s_0s_{i-1} \cdots s_1s_0^4,\dots,s_0s_{i-1} \cdots s_1s_0^{r-2} \right\}\\
C_i^1 & = & \left\{ 1,s_{i-1} \cdots s_1s_0,s_{i-1}\cdots s_1s_0^3,\dots,s_is_{i-1} \cdots s_1s_0^{r-1} \right\},
\end{eqnarray*}
or, in the language of the set $\mathcal A$ of generators of $A_{r,n}$:
\begin{eqnarray*}
C_i^0 & = & \left\{ \begin{array}{l} a_0^{\frac{r+2}{4}}a_{i-1} \cdots a_1,a_0^{\frac{r+2}{4}}a_{i-1} \cdots a_1a_0,a_0^{\frac{r+2}{4}}a_{i-1} \cdots a_1a_0^2,\dots , \\ a_0^{\frac{r+2}{4}}a_{i-1} \cdots a_1a_0^{\rh-1} \end{array} \right\} \\
C_i^1 & = & \left\{ 1,a_{i-1} \cdots a_1^{-1},a_{i-1} \cdots a_2a_1^{-1}a_0,\dots,a_{i-1} \cdots a_1^{-1}a_0^{\frac{r}{2}-1} \right\}.
\end{eqnarray*}


For even $i-1$, we define:
\begin{eqnarray*}
C_i^{0} & = & \left\{ s_0 s_{i-1} \cdots s_1s_0,s_0s_{i-1} \cdots s_1s_0^3,\dots,s_0s_{i-1} \cdots s_1s_0^{r-1} \right\}\\
C_i^1 & = & \left\{ 1,s_{i-1} \cdots s_1,s_{i-1} \cdots s_1 s_0^2,s_{i-1} \cdots s_1 s_0^4,\dots,s_{i-1} \cdots s_1s_0^{r-2} \right\},
\end{eqnarray*}
or, in the language of the set $\mathcal A$ of generators of $A_{r,n}$:
\begin{eqnarray*}
C_i^0 & = & \left\{ \begin{array}{l} a_0^{\frac{r+2}{4}}a_{i-1}  \cdots a_1^{-1},a_0^{\frac{r+2}{4}}a_{i-1} \cdots a_1^{-1}a_0,
a_0^{\frac{r+2}{4}}a_{i-1} \cdots a_1^{-1}a_0^2,\dots ,\\ a_0^{\frac{r+2}{4}}a_{i-1} \cdots a_1^{-1}a_0^{\rh-1} \end{array} \right\}\\
C_i^1 & = & \left\{ 1,a_{i-1} \cdots a_1, a_{i-1} \cdots a_1a_0,a_{i-1} \cdots a_1a_0^2,\dots,a_{i-1} \cdots a_1a_0^{\rh-1} \right\}.
\end{eqnarray*}

Define also:
$$C_i = C_i^0 \cup C_i^1 \mbox{ and } C_{n+1}=\left\{ 1,a_0^{\frac{r+2}{4}} \right\}.$$

Let $\pi \in A_{r,n}$. Write $\pi$ as a product of the generators of $G_{r,n}$ in the canonical form described above.

If there is no coloring part, then $\pi \in A_n$ (the classical alternating group in $S_n$), so its expression contains an even number of generators from the set $\{s_1,\dots,s_{n-1}\}$. We can easily make the pairing by the relations mentioned above. The length of such an expression is clearly ${\rm inv}(\pi)$ (note that in this case, it does not matter whether we use the length order or the usual order).

Otherwise, we start with the coloring part. Denote by $i=i_1$ the smallest colored digit in the window notation of $\pi$, and by $z=z_{i_1}$ its color. We divide our treatment into four cases:

\begin{enumerate}
\item {\bf $i-1$ and $z$ are  both even}: In this case, we translate $s_{i-1} \cdots s_1$ to $a_{i-1} \cdots a_1$ and $s_0^{z}$ to $a_0^{\frac{z}{2}}$, so the contribution of this sub-expression is $i-1+R_{\rh}(\frac{z}{2})=i-1+R_{\rh}(z \oslash{2})$. We have used $a_{i-1} \cdots a_1 a_0^{\frac{z}{2}} \in C_i$.

\item {\bf $i-1$ is even and $z$ is odd}: \label{i-1 even zi odd} In this case, we translate $s_{i-1} \cdots s_1 s_0^{z-1}$ to $a_{i-1} \cdots a_1 a_0 ^{\frac{z-1}{2}} \in C_i$ and leave an additional generator $s_0$ which will be treated during the coloring of the next digit, or just before the ordering part. Note that since $\pi \in A_{r,n}$, there must be some $s_j$, $j \neq 0$, appearing right after the sub-expression $s_{i-1} \cdots s_1 s_0^{z}$.
    In calculating the contribution of coloring the current digit (including the missing generator $s_0$ which will be paired later), consider the sub-expression $$s_0^{z}s_j=s_0^{\rh}s_0^{z+\rh} s_j=a_0^{\frac{z+\rh}{2}} s_0^{\rh} s_j=a_0^{\frac{z+\rh}{2}} a_j=a_0^{z \oslash 2} a_j.$$
    Hence, $i$ contributes $i-1+R_{\rh}(z \oslash 2)$. We have used
    $$a_{i-1} \cdots a_1 a_0^{\frac{z-1}{2}} \in C_i,$$
    and note that in the next colored digit, we complete the remaining $a_0^{\frac{r+2}{4}}$ since $z \oslash 2=\frac{z-1}{2}+\frac{r+2}{4}$. If $i$ is the last colored digit, then the term $a_0^{\frac{r+2}{4}}\in C_{n+1}$ will be chosen from the set $C_{n+1}$.

\item {\bf $i-1$ and $z$ are both odd}: In this case, the sub-expression $s_{i-1} \cdots s_2$ will be translated to $a_{i-1} \cdots a_{2}$ and $s_1 s_0^{z}$ will be written as  $s_1 s_0^{\rh} s_0^{z+\rh}=a_1^{-1}a_0^{z \oslash 2}$. This expression contributes $i-1+R_{\rh}(z \oslash 2)$ to the length of $\pi$, and we have used
    $$a_{i-1} \cdots a_2 a_1^{-1}a_0^{z \oslash 2} \in C_i.$$

\item {\bf $i-1$ is odd and $z$ is even}: Here, again, the sub-expression $s_{i-1}\cdots s_2$ will be translated to $a_{i-1} \cdots a_2$ and $s_1s_0^{z-1}$ will be translated to $a_1^{-1}a_0^{\frac{z-1+\rh}{2}}=a_1^{-1}a_0^{(z-1) \oslash 2}$, so we use:
    $$a_{i-1} \cdots a_2 a_1^{-1} a_0^{(z-1) \oslash 2} \in C_{i},$$
    and leave an additional generator $s_0$ which will be paired with some $s_j$ during the coloring of the next digit or just before the ordering part.
    In order to calculate the contribution of coloring this digit to the length of $\pi$ (including the missing generator $s_0$ which will be paired later), we borrow the generator $s_j$ appearing just after the coloring expression of the current digit: $s_1s_0^{z}s_j=s_1s_0^{\rh}s_0^{z+\rh}s_j$. Since we wrote:  $s_1s_0^{\rh}=a_1^{-1}$, we are left with $s_0^{z+\rh}s_j=a_0^{z \oslash 2}$. The contribution in this case is again $i-1+R_{\rh}(z \oslash 2)$. Now, since
    $$z \oslash 2 \equiv \left( (z-1) \oslash 2+\frac{r+2}{4} \right) \left( {\rm mod} \ {\frac{r}{2}} \right),$$ we take
    $$a_{i-1} \cdots a_1^{-1} a_0^{(z-1) \oslash 2} \in C_i$$
    and the remaining $a_0^{\frac{r+2}{4}}$ will be taken from the next colored digit or from $C_{n+1}$ (as in case (2)).
\end{enumerate}

\medskip

Now, we apply the same procedure to the next colored digits, but note that there might be a situation in which the expression coloring the digit $j$ is $s_0s_{j-1}s_{j-2} \cdots$, due to our debt of the generator $s_0$ from the preceding colored digit, so the cases might be switched after converting $s_0 s_j$ to $a_0^{\frac{r+2}{4}}a_{j-1}$.

\medskip

The following example will illuminate the situation.

\begin{exa}
Let $\pi= \left( 1 2^{[2]} 4 5^{[1]} 3^{[3]} \right) \in A_{6,5}$.
Then the ordered permutation is: $\sigma=\left( 5^{[1]} 3^{[3]} 2^{[2]} 1 4 \right)$.
We perform the coloring part:
$$(1 2 3 4 5) \stackrel{\textcircled{1}\ s_1s_0}{\longrightarrow} \left( 2^{[1]} 1 3 5 4 \right)  \stackrel{\textcircled{2} \ s_0 s_2 s_1 s_0^3}{\longrightarrow}
\left( 3^{[3]} 2^{[2]} 1 4 5 \right) \stackrel{\textcircled{3} \ s_4s_3s_2s_1}{\longrightarrow} \left( 5 3^{[3]} 2^{[2]} 1 4 \right) = \sigma .$$

\noindent
{\bf Step \textcircled{1}}: The smallest colored digit is $2$, which has to be colored by two colors, so we are in case (4). We choose $s_1s_0=a_1^{-1}a_0^{2}$ from $C_2^1$. The additional generator $s_0$ will be treated in the next step. Note that in the calculation of the contribution of this step to the length of $\pi$ we borrow the generator $s_2$ from the next colored digit: 
$$s_1 s_0^2 s_2=s_1 s_0^{3+2+3} s_2 = \left( s_1 s_0^3 \right) s_0^2 \left( s_0^3 s_2 \right) = a_1^{-1} a_0 a_2.$$
This expression contributes only $2$ to the length of $\pi$. The generator $a_2$ will be counted in the next step.

\medskip

\noindent
{\bf Step \textcircled{2}}: The next colored digit is $3$, and we have a debt of a generator $s_0$ from the previous step. Thus, we choose
$$s_0 s_2 s_1 s_0 s_0 s_0 = s_0^4 \left( s_0^3 s_2 \right) \left( s_1 s_0^3 \right) = a_0^2 a_2 a_1^{-1} \in C_3^0.$$
Even though $i-1=2$ is even and $z=3$ is odd (case (2)), after $s_0s_2$ in the previous step, we are actually again in case (4).
Note that the expressions $a_1^{-1} a_0^2$ from step $\textcircled{1}$ and $a_0^2 a_2 a_1^{-1}$ from step $\textcircled{2}$ join together to
be $a_1^{-1}a_0a_2 a_1^{-1}$.

\medskip

\noindent
{\bf Step \textcircled{3}}: The next colored digit is $5$. We choose: 
$$s_4 s_3 s_2 s_1 = \left( s_4 s_0^3 \right) \left( s_0^3 s_3 \right) \left( s_2 s_0^3 \right) \left( s_0^3 s_1 \right) = a_4 a_3 a_2 a_1 \in C_5^1,$$
and leave the treatment of the additional generator $s_0$ to the next step (in this case, to the transition between the coloring part and the ordering part, i.e. $a_0^2 \in C_6$). We will elaborate on this point after describing the ordering part.
\end{exa}

\subsubsection{The ordering part}
We turn now to the ordering part. For $1 \leq k \leq n-1$, define the sets $O_k$ as follows:
\begin{eqnarray*}
O_1 & = & \left\{s_{i} s_{i-1}\cdots s_1 s_0^{\frac{r}{2}\varepsilon} \mid 1\leq i\leq n-1,\ \ \varepsilon \equiv i ({\rm mod} \ 2)\right\} \cup \{ 1 \}=\\
& = &\{a_i a_{i-1}\cdots a_1^{1-2\varepsilon} \mid 1 \leq i \leq n-1,\ \
\varepsilon \equiv i ({\rm mod}\ 2) \}\cup \{1\},
\end{eqnarray*}

\noindent
and for $2 \leq k \leq n-1$:
\begin{eqnarray*}
O_k & = & \left\{s_{i} s_{i-1}\cdots s_k s_0^{\frac{r}{2}\varepsilon} \mid k\leq i\leq n-1,\ \ \varepsilon \equiv (i+1-k) ({\rm mod}\ 2)\right\} \cup \{ 1 \}=\\
& = & \{a_ia_{i-1}\cdots a_k \mid k \leq i \leq n-1\} \cup \{1\}.
\end{eqnarray*}

We start by pushing the digit $i_1=|\sigma|(1)$ of $\pi$ to its
correct place. Let $p_1=
|\pi^{-1}|(i_1)$. The pushing will be
done by multiplying $\pi$ (from the right) by the element $o_1 \in
O_1$, where:
$$o_1=\left\{\begin{array}{cc}
s_{p_1-1}s_{p_1-2}\cdots s_1=a_{p_1-1} \cdots a_1 & p_1-1\ {\rm is\ even} \\
s_{p_1-1}s_{p_1-2}\cdots s_1 s_0^{\frac{r}{2}}=a_{p_1-1} \cdots a_2 a_1^{-1} & p_1-1\ {\rm is\ odd}.
\end{array} \right.$$

Now, we continue to push the other digits of $\pi$:  for each $1 < k
\leq n-2$, assuming that the digit $i_k=\sigma(k)$ is now located at position
$p_k$, we use the element $o_k \in O_k$ defined by:
$$o_k=\left\{\begin{array}{cc}
s_{p_k-1}s_{p_k-2}\cdots s_k=a_{p_k-1} \cdots a_k & p_k-k\ {\rm is\ even} \\
s_{p_k-1}s_{p_k-2}\cdots s_k s_0^{\frac{r}{2}}=a_{p_k-1} \cdots a_k & p_k-k\ {\rm is \ odd},
\end{array} \right.$$
in order to push the digit $i_k$ to its correct place in $\pi$. Now, we have two possibilities:
\begin{itemize}
\item The coloring part was completed without remainders, which means that both the coloring part and the ordering part consist of even number of $G_{r,n}$-generators. In this case, we choose $1 \in C_{n+1}$, and we have that:
    $$\pi= \gamma_1 \cdots \gamma_n \cdot 1 \cdot o_{n-1}^{-1} \cdots o_1^{-1},$$
    where $\gamma_i \in C_i$ for $1 \leq i \leq n$, and $o_i \in O_i$ for $1 \leq i \leq n-1$.
\item The coloring part has a remainder of the generator $s_0$. This means that the coloring part required an odd number of $G_{r,n}$-generators, and therefore the ordering part had an odd number of  $G_{r,n}$-generators as well (since the sum of their lengths is even). Thus, in the ordering part, there will be a remainder of $s_0^{\rh}$, so we choose $s_0^{\rh} s_0 = s_0^{\rh +1} = a_0^{\frac{r+2}{4}} \in C_{n+1}$, and we have that:
    $$\pi= \gamma_1 \cdots \gamma_n \cdot a_0^{\frac{r+2}{4}} \cdot o_{n-1}^{-1} \cdots o_1^{-1},$$  where $\gamma_i \in C_i$ for $1 \leq i \leq n$, and $o_i \in O_i$ for $1 \leq i \leq n-1$.
\end{itemize}

In both cases, we have now: $\pi=\sigma \cdot o_{n-1}^{-1} \cdots o_1^{-1}$, and we are done.

\begin{exa}
We continue the previous example. Again, let $\pi=\left( 1 2^{[2]} 4 5^{[1]} 3^{[3]} \right)$. After the completion of the coloring part, we have reached the ordered permutation: $\sigma=\left( 5^{[1]} 3^{[3]} 2^{[2]} 1 4 \right)$.

Now, we go the other way around: we start with $\pi$ and order it to
obtain $\sigma$:
$$\left( 1 2^{[2]} 4 5^{[1]} 3^{[3]} \right) \ \  \stackrel{\textcircled{7} \ s_3s_2 s_1 s_0^3}{\longrightarrow} \ \ \left( 5^{[4]} 1 2^{[2]} 4 3^{[3]} \right)  \stackrel{\textcircled{6} \ s_4 s_3
s_2 s_0^3}{\longrightarrow} \ \ \left( 5^{[1]} 3^{[3]} 1 2^{[2]} 4 \right) \ \  \stackrel{\textcircled{5} \ s_3 s_0^3}{\longrightarrow}$$
$$\rightarrow \ \ \left( 5^{[4]} 3^{[3]} 2^{[2]} 1 4 \right) \ \ \stackrel{\textcircled{4} \ s_0^4}{\longrightarrow} \ \ \left( 5^{[4]} 3^{[3]} 2^{[2]} 1 4 \right) = \sigma.$$

In {\bf step \textcircled{7}}, we push the digit $5$ to its correct place with respect to the ordered permutation $\sigma$. We use $s_3s_2s_1s_0^3=a_3a_2a_1^{-1} \in O_1.$
Note that the digit $5$ is bearing extra three colors.
In {\bf step \textcircled{6}}, we push the digit $3$ into its place, using $s_4s_3s_2s_0^3=a_4a_3a_2 \in O_2$. Note that the color of the digit $5$ is correct again.
Next, in {\bf step \textcircled{5}}, we push $2$ to its correct place using $s_3s_0^3=a_3 \in O_3$.

Now, note that after completing step $\textcircled{5}$, we still did not arrive at $\sigma$, since the digit $5$ has again a wrong color. On the other hand, we have a debt of a generator $s_0$ from the coloring part. Both problems would be solved simultaneously by using $s_0 s_0^3=a_0^2 \in C_{6}$. This is exactly what we have done in {\bf step \textcircled{4}}.
\end{exa}

\medskip


From the above analysis we can conclude that each permutation $\pi \in \arn$ has a {\it canonical decomposition} with respect to the set $\mathcal{A}$. This is the context of the following theorem.

\begin{thm}\label{generate_thm}
The set ${\mathcal A}=\{a_i \mid 0 \leq i \leq n-1 \} \cup \{a_1^{-1}\}$ generates
$A_{r,n}$. Moreover, for each $\pi \in A_{r,n}$, there is a {\em unique
presentation} as:
$$\pi=\gamma_1 \cdots \gamma_n \gamma_{n+1} \cdot o_{n-1}^{-1} \cdots
o_1^{-1},$$
where $\gamma_i \in C_i$ for $1 \leq i \leq n+1$ and $o_j \in O_j$ for $1 \leq j \leq n-1$. This presentation is called {\em the canonical decomposition of $\pi$}.
\end{thm}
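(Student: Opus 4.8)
The plan is to separate the statement into (i) existence of the displayed decomposition, together with the assertion that $\mathcal A$ generates $\arn$, and (ii) its uniqueness, and to derive (ii) from surjectivity together with a cardinality count.

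First I would dispose of (i) directly from the algorithm of the previous subsections. Applied to an arbitrary $\pi\in\arn$, the coloring part, the ordering part, and the choice of the transition factor produce an expression $\gamma_1\cdots\gamma_n\gamma_{n+1}\,o_{n-1}^{-1}\cdots o_1^{-1}$ with $\gamma_i\in C_i$ and $o_j\in O_j$; this is exactly the content of the four-case analysis and of the discussion of $\gamma_{n+1}\in C_{n+1}=\{1,a_0^{\frac{r+2}{4}}\}$. Since every element of every $C_i$ and $O_j$ is by construction a word in $\mathcal A$, producing this output already shows that $\mathcal A$ generates $\arn$ and exhibits a decomposition of the required shape for each $\pi$; hence existence and generation are immediate.

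For uniqueness I would argue by counting. Let $\Phi$ send an admissible tuple to the product $\gamma_1\cdots\gamma_{n+1}\,o_{n-1}^{-1}\cdots o_1^{-1}$; by (i) it maps onto $\arn$. The ordering sets satisfy $\prod_{k=1}^{n-1}|O_k|=n(n-1)\cdots 2=n!$ and record the underlying permutation $\tau$, which can be isolated by composing with the color-forgetting map $|\,\cdot\,|\colon\grn\to S_n$: this kills every $s_0$-contribution and leaves the classical, \emph{unique} $S_n$ normal form of $|\pi|$, pinning down the permutation skeleton. The remaining coloring data together with $\gamma_{n+1}$ records the color vector $(z_1,\dots,z_n)$. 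I would then show that the admissible words correspond bijectively to the pairs $\big((z_1,\dots,z_n),\tau\big)\in\mathbb{Z}_r^n\times S_n$ satisfying the single parity constraint $\mathrm{sgn}(\tau)=(-1)^{\sum_i z_i}$ that defines $\ker\varphi$; there are exactly $\frac{r^n n!}{2}=|\arn|$ of these, so, with surjectivity, $\Phi$ restricts to a bijection on canonical words and the decomposition is unique.

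The hard part will be the coloring count, because the factors $\gamma_i$ are \emph{not} chosen independently from the $C_i$: coloring a digit with a color whose parity does not match that of $i-1$ leaves a \emph{debt} of one generator $s_0$, which is carried to the next colored digit (switching its case) and is finally absorbed by the following $\gamma$ or by $\gamma_{n+1}$, so that the coloring and ordering data are coupled through this debt. I would make this precise by tracking a single debt bit through $\gamma_1,\dots,\gamma_{n+1}$ and checking, case by case, that the assignment of $(z_1,\dots,z_n)$ to the admissible word is injective and hits every parity-compatible color vector; here the identities $a_0^{\rh}=1$ and $z\oslash 2\equiv\big((z-1)\oslash 2+\tfrac{r+2}{4}\big)\ (\mathrm{mod}\ \rh)$ ensure that no two distinct color vectors produce the same word and that the terminal debt is always absorbable in $C_{n+1}$. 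A slicker alternative that sidesteps this bookkeeping is to note that the algorithm defines a map $\Psi\colon\arn\to\{\text{words}\}$ with $\Phi\circ\Psi=\mathrm{id}$ (the canonical word multiplies back to $\pi$): this alone forces $\Psi$ to be injective and $\Phi$ to be a bijection on its image, yielding uniqueness without an explicit count.
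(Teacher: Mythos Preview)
Your overall strategy---existence from the algorithm, uniqueness by showing that the set of admissible tuples has cardinality $|A_{r,n}|=\tfrac{r^n n!}{2}$---is exactly the paper's. The paper, however, does the piece you flag as ``the hard part'' concretely rather than leaving it as a plan: it first pins down precisely which tuples in $C_1\times\cdots\times C_{n+1}\times O_{n-1}\times\cdots\times O_1$ are admissible (called \emph{legal vectors}), via two combinatorial conditions on consecutive nontrivial $\gamma$'s that encode your debt bit, and then counts them directly. The count classifies each $\gamma_i$ ($i\ge 2$) as \emph{external} (ending in $s_0^{r-1}$ or $s_1$, two options, but constraining the next nontrivial factor) or \emph{internal} ($r-1$ free options), giving
\[
|L|=\frac{r}{2}\cdot\sum_{i=0}^{n-1}\binom{n-1}{i}(r-1)^{n-1-i}\cdot n!=\frac{r\,n!}{2}\,r^{n-1}=\frac{r^n n!}{2}.
\]
Your proposal to biject admissible words with parity-constrained pairs $((z_1,\dots,z_n),\tau)$ risks circularity, since that bijection is essentially $\Phi$ itself; the external/internal count sidesteps this by enumerating the domain without reference to the target.

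Your ``slicker alternative'' has a genuine gap. From $\Phi\circ\Psi=\mathrm{id}$ you obtain only that $\Psi$ is injective and that $\Phi$ restricted to $\mathrm{Im}(\Psi)$ is a bijection onto $A_{r,n}$. This does \emph{not} rule out an admissible tuple outside $\mathrm{Im}(\Psi)$ that also multiplies to $\pi$; since $\prod|C_i|\cdot\prod|O_j|=(r/2)(r+1)^{n-1}\cdot 2\cdot n!>|A_{r,n}|$, such collisions are not excluded a priori. To close the argument you still need either the cardinality match $|L|=|A_{r,n}|$ or a direct proof that every legal vector lies in $\mathrm{Im}(\Psi)$---precisely the work the alternative was meant to avoid.
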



\begin{proof}
Let $M$ be the Cartesian product
$$M=C_1 \times \cdots \times C_n \times C_{n+1} \times O_{n-1} \times \cdots \times O_1.$$

We start by defining a subset $L$ of $M$ which we call {\it the set of legal vectors}.  A vector $\vec{\omega} = (\gamma_1, \dots ,\gamma_n, \gamma_{n+1},o_{n-1}, \dots, o_1) \in M$ is called a {\it legal vector} if it satisfies the following two conditions:
\begin{enumerate}
\item Let $i$ and $j$ be two indices satisfying $\gamma_k=1$ for all $i<k<j$, $\gamma_i\neq 1$ and $\gamma_j\neq 1$ (i.e. the digits $i$ and $j$ are colored, but the digits between them are not colored). If $\gamma_i$ ends with $s_0^{r-1}$, then $\gamma_j$ does not start with $s_0$.
\item Let $i$ and $j$ be two indices satisfying $\gamma_k=1$ for all $i<k<j$, $\gamma_i\neq 1$ and $\gamma_j\neq 1$. If $\gamma_i$ ends with $s_1$, then $\gamma_j$ starts with $s_0$.
\end{enumerate}

\medskip

We have to prove the following two claims:
\begin{enumerate}
\item[(a)] The algorithm associates a legal vector in $L$ to any $\pi \in A_{r,n}$. This proves the existence of the presentation.
\item[(b)] $|L|=\frac{r^n n!}{2}(=|A_{r,n}|)$, which implies the uniqueness.
\end{enumerate}

\medskip

Claim (a) is implied immediately from the algorithm, so we pass to the proof of Claim (b). For that, we define the notions of {\it external} and {\it internal} components of a vector:
$$\vec{\omega}=(\gamma_1, \dots, \gamma_n,\gamma_{n+1},o_{n-1}, \dots, o_1) \in L.$$
A component $\gamma_i \in C_i$ is called {\it external}, if it ends either with $s_0^{r-1}$ or with $s_1$ (i.e. a component which imposes a restriction on the next non-trivial component), and {\it internal} otherwise. Note that $\gamma_1$ is always internal, since by definition, the generator $s_1$ does not appear in $\gamma_1$ and it cannot end with the expression $s_0^{r-1}$ since $r-1$ is odd.

For constructing an element of $L$, we start by choosing the element $\gamma_1 \in C_1$, out of $\rh$ possibilities. Next, we choose which components will be external. Note that for each external component there are two possibilities, but each external component restricts the possibilities for the next non-trivial component (i.e. $\gamma_i \neq 1$). Next, for each internal component, we choose one out of $r-1$ possibilities.

When the coloring part is over, we have two possibilities: If we have no remainder from the coloring part, then we choose $\gamma_{n+1}=1$, and we have to complete the process by a permutation of $S_n$ of even length. On the other hand, if we do have a remainder, then we choose $\gamma_{n+1}=a^{\frac{r+2}{4}}$ and we have to complete the process by a permutation of $S_n$ of odd length. Altogether, this contributes $n!$ possibilities for completing the presentation.

\medskip

Following the above discussion, we have that the number of legal vectors is:
$$|L|=\rh \cdot \sumlim_{i=0}^{n-1}{{{n-1} \choose i} (r-1)^{n-1-i}} \cdot n!=\frac{r\cdot n!}{2}((r-1)+1)^{n-1}=\frac{n!}{2}r^n,$$
as needed.
\end{proof}

\begin{rem}
Note that we do not claim that the presentation, described above, is irreducible as it. Take for example the expression for $\pi=1 2^{[2]} 4 5^{[1]} 3^{[3]}$, computed in Example \ref{pairing},
to be: 
$$a_1^{-1} \bold{a_0^2 a_0^2} a_2 a_1^{-1} \bold{a_0^2 a_0^2 a_0^2} a_4 a_3 a_2 a_1 a_0^2 a_3 a_2 a_3 a_4 a_1 a_2 a_3,$$ 
which can be shortened to  $a_1^{-1} a_0 a_2 a_1^{-1}a_4 a_3 a_2 a_1 a_0^2 a_3 a_2 a_3 a_4 a_1 a_2 a_3$. On the other hand, after we cancel all the redundant appearances of $a_0$, we do obtain an irreducible expression, as will be proven in the next section.
\end{rem}

For $\pi \in A_{r,n}$, let $L_A(\pi)$ be the number of generators needed to write $\pi$ as a product of the $A_{r,n}$-generators by the algorithm.

As a consequence of the analysis of the algorithm, we have the following result:

\begin{thm}\label{first con}

\begin{enumerate}
\item Let $\pi \in A_{r,n}$, and let
$\omega=s_0^{z_1}b_1s_0^{z_2}b_2 \cdots s_0^{z_n}b_n$ where $b_i \in \left( {\mathcal S}-\{s_0\} \right)^*$ be its canonical presentation with respect to $\mathcal S$.
Then the translation of $\omega$ to the generators in $\mathcal A$ will be
$$a_0^{z_1 \oslash 2} b_1' a_0^{z_2 \oslash 2} b_2' \cdots a_0^{z_n \oslash 2} b_n',$$
where $b_i' \in \left( {\mathcal A}-\{a_0\} \right)^*$.  

Moreover, if $b_i=s_{i_1} \cdots s_{i_k}$, then
$b_i'=a_{i_1}^{\varepsilon_{i_1}} \cdots a_{i_k}^{\varepsilon_{i_k}}$, where $\varepsilon_{i_j}\in \{\pm 1\}$ if $i_j=1$ and $\varepsilon_{i_j}=1$ otherwise.

\item Let $\pi \in A_{r,n}$. Then:
$$L_A(\pi)=\sumlim_{z_i(\pi) \neq 0}{(i-1)} +{\rm inv}(\pi) + \sumlim_{i=1}^n\left({z_i(\pi)} \oslash 2 \right) .$$
\end{enumerate}

\end{thm}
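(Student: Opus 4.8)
The plan is to prove Part (1) first, since Part (2) is an immediate bookkeeping consequence of it together with the known length formula on $\grn$. Both parts rest on a small set of elementary identities in $\grn$, which I would record at the outset. From $a_i = s_0^{\rh}s_i$ one gets $s_0^{\rh}s_i = a_i$ for $i \geq 1$, while $s_i s_0^{\rh} = a_i$ for $i \geq 2$ (because $s_i$ commutes with $s_0$, whence $a_i^2 = s_0^{r} = 1$) but $s_1 s_0^{\rh} = a_1^{-1}$; and $a_0 = s_0^2$. The crucial identity is the translation of a pure power of $s_0$: using the definition of $\oslash$ and the fact that $\rh$ is odd (since $r = 4k+2$), one checks directly that
\[
s_0^{z} = a_0^{z \oslash 2}\cdot s_0^{\rh\,[z \text{ odd}]},
\]
that is, $s_0^z = a_0^{z \oslash 2}$ when $z$ is even and $s_0^z = a_0^{z\oslash 2}s_0^{\rh}$ when $z$ is odd. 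This is the mechanism that turns the color datum $z_i$ into a power of $a_0$ and isolates a single ``leftover'' factor $s_0^{\rh}$ exactly when $z_i$ is odd.

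For Part (1) I would sweep through the canonical word $\omega = s_0^{z_1}b_1 \cdots s_0^{z_n}b_n$ from left to right, carrying a single pending factor $s_0^{\rh}$ (a parity ``debt'') whenever one is produced. Processing a block $s_0^{z_i}b_i$, the power $s_0^{z_i}$ is rewritten as $a_0^{z_i \oslash 2}$ together with the possible leftover $s_0^{\rh}$, which is then absorbed by the first non-$s_0$ generator of $b_i$ via $s_0^{\rh}s_j = a_j$; the remaining generators of $b_i$ are converted in pairs by inserting $s_0^{\rh}s_0^{\rh} = 1$ and applying the identities above. This produces a block $a_0^{z_i \oslash 2}b_i'$ with $b_i' \in (\mathcal{A}-\{a_0\})^*$ and $|b_i'| = |b_i|$. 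The content of the ``moreover'' clause is that $\varepsilon$ is forced to $+1$ on every $a_j$ with $j \geq 2$ (there $a_j = a_j^{-1}$) and can only be $\pm 1$ on $a_1$, the sign recording whether the accompanying $s_0^{\rh}$ was absorbed on the left ($s_0^{\rh}s_1 = a_1$) or on the right ($s_1 s_0^{\rh} = a_1^{-1}$). The one delicate point is that the debt may be passed across a block boundary, so the $a_0$-exponent actually produced in a given block can differ from $z_i \oslash 2$; here I would invoke the fact that $\oslash$ commutes with addition modulo $\rh$ (equation (\ref{oslash commutes})) to show that the total exponent of $a_0$ collected over any run of blocks agrees, after reduction, with $\sum(z_i \oslash 2)$, matching the claimed normal form. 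This per-block verification is precisely the four-case analysis already carried out in Section \ref{alg}, which I would cite rather than repeat.

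For Part (2), the length $L_A(\pi)$ is the number of $\mathcal{A}$-generators in the word produced by the algorithm, which by Part (1) equals
\[
\sumlim_{i=1}^n (z_i \oslash 2) + \sumlim_{i=1}^n |b_i'| = \sumlim_{i=1}^n (z_i \oslash 2) + \sumlim_{i=1}^n |b_i|.
\]
The first sum is already the last term of the asserted formula. For the second, $\sum_i |b_i|$ is the number of non-$s_0$ generators in the canonical $\grn$-word of $\pi$; since that word is reduced (\cite[Theorem 4.3]{B}) of length $\ell_{\grn}(\pi) = {\rm inv}(\pi) + \sum_{z_i \neq 0}(i-1) + {\rm csum}(\pi)$, while the number of occurrences of $s_0$ in it is ${\rm csum}(\pi) = \sum_i z_i$, the non-$s_0$ count is exactly ${\rm inv}(\pi) + \sum_{z_i \neq 0}(i-1)$. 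Substituting yields the formula.

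The main obstacle is the parity/debt bookkeeping in Part (1): making rigorous that the single leftover factor $s_0^{\rh}$ is always available to be absorbed locally (so each $b_i$ converts to an honest word $b_i'$ over $\mathcal{A}-\{a_0\}$ of the same length), and that the redistribution of $a_0$-powers across block boundaries becomes invisible once (\ref{oslash commutes}) is applied. Everything else — the elementary identities and the counting in Part (2) — is routine once this is settled.
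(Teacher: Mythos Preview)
Your proposal is correct and follows essentially the same route as the paper: Part~(1) is obtained from the four-case block analysis of the algorithm in Section~\ref{alg} (which you explicitly cite), and Part~(2) is deduced from Part~(1) together with the known $\grn$-length formula $\ell_{\grn}(\pi)={\rm inv}(\pi)+\sum_{z_i\neq 0}(i-1)+{\rm csum}(\pi)$ from \cite[Theorem~4.3]{B}. Your write-up is considerably more explicit about the debt mechanism and the $|b_i'|=|b_i|$ count than the paper's two-line proof, but the underlying argument is the same.
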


\begin{proof}
Part (1) is straightforward from the analysis of the algorithm, so we proceed to the proof of part (2).

In \cite[Theorem 4.3]{B}, the algorithm for presenting an element in $\grn$ is described (see also \cite{S}). It is proven that for $\pi \in \grn$, the length of $\pi$, with respect to the generators in $\mathcal S$, is: 
$$\ell_{\grn}(\pi)=\sumlim_{z_i(\pi) \neq 0}{(i-1)} +{\rm inv}(\pi) +\sumlim_{i=1}^n\left({z_i(\pi)}\right),$$ 
so by part (1) we have:
\begin{equation}\label{length wrt grn}
L_A(\pi)=\sumlim_{z_i(\pi) \neq 0}{(i-1)} +{\rm inv}(\pi) + \sumlim_{i=1}^n\left({z_i(\pi)} \oslash 2 \right).
\end{equation}
\end{proof}

Another consequence of the algorithm is the following criterion for an element for being in the group $A_{r,n}$:

\begin{thm}\label{criteria}
Let $\pi \in G_{r,n}$. Then: $\pi \in A_{r,n}$ if and only if:
$${\rm csum}(\pi) + {\rm inv}(|\pi|) \equiv 0 ({\rm mod}\ 2).$$
\end{thm}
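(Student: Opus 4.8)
The plan is to evaluate $\varphi(\pi)\in\mathbb{Z}_2$ directly, since by definition $\pi\in A_{r,n}$ precisely when $\varphi(\pi)$ is trivial. As $\varphi$ is a homomorphism into $\mathbb{Z}_2$ sending every element of $\mathcal{S}$ to the non-trivial element, $\varphi(\pi)$ records the parity of the number of letters in any $\mathcal{S}$-word for $\pi$; in particular $\varphi(\pi)$ is trivial exactly when $\ell_{\grn}(\pi)$ is even. So the whole task is to read off this parity in terms of ${\rm csum}(\pi)$ and ${\rm inv}(|\pi|)$.

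First I would factor $\pi=(\vec z,\tau)$ as $\pi=\zeta\cdot|\pi|$, where $\zeta=(\vec z,\mathrm{id})$ is the pure colour part and $|\pi|=(\vec 0,\tau)$; this is immediate from the multiplication rule of $\grn$. Since $\varphi$ is a homomorphism, $\varphi(\pi)=\varphi(\zeta)\varphi(|\pi|)$, so it suffices to handle the two factors separately.

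For the permutation factor, $|\pi|$ lies in the subgroup $\langle s_1,\dots,s_{n-1}\rangle\cong S_n$, on which $\varphi$ restricts to the ordinary sign character; hence $\varphi(|\pi|)$ encodes the parity of the Coxeter length of $\tau$, which is ${\rm inv}(\tau)={\rm inv}(|\pi|)$ (on uncoloured letters the length order coincides with the usual order). For the colour factor I would write $\zeta=\prodlim_{i=1}^n t_i^{z_i}$, where $t_i=(\vec e_i,\mathrm{id})$ raises only the colour of the value $i$. Each $t_i$ is a conjugate $w s_0 w^{-1}$ with $w\in\langle s_1,\dots,s_{n-1}\rangle$ (choose $w$ with $w(1)=i$); as $\mathbb{Z}_2$ is abelian, $\varphi(t_i)=\varphi(s_0)$, the non-trivial element, for every $i$. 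Therefore $\varphi(\zeta)$ records the parity of $\sumlim_{i=1}^n z_i={\rm csum}(\pi)$.

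Combining the two computations gives that $\varphi(\pi)$ is trivial iff ${\rm csum}(\pi)+{\rm inv}(|\pi|)\equiv 0\pmod 2$, which is the assertion. The one step that demands care is the evaluation of $\varphi(t_i)$: I must verify that every elementary colouring is genuinely a conjugate of $s_0$ inside $\grn$, so that the (abelian) target forces $\varphi(t_i)=\varphi(s_0)$. A route staying closer to the algorithm would instead substitute the length formula of Theorem~\ref{first con} into $\varphi(\pi)\equiv\ell_{\grn}(\pi)\pmod 2$; there the effort shifts to proving ${\rm inv}(\pi)+\sumlim_{z_i(\pi)\neq 0}(i-1)\equiv{\rm inv}(|\pi|)\pmod 2$, whose both-coloured case — comparing two coloured letters in the length order — is the genuinely delicate point that the homomorphism computation avoids.
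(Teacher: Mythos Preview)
Your proof is correct. It differs from the paper's argument in its organising idea: the paper works with the canonical $\mathcal S$-word for $\pi$ produced by the algorithm, observes that the number of $s_0$'s in that word is exactly ${\rm csum}(\pi)$, and that deleting all $s_0$'s leaves an $\{s_1,\dots,s_{n-1}\}$-word for $|\pi|$, whose length therefore has the parity of ${\rm inv}(|\pi|)$. You instead factor $\pi=(\vec z,\mathrm{id})\cdot(\vec 0,\tau)$ multiplicatively and evaluate $\varphi$ on each factor, using that each elementary colouring $t_i$ is a conjugate of $s_0$. Your route is more structural and independent of the algorithm; the paper's route exploits the canonical decomposition just established, making the bookkeeping automatic without needing the conjugacy observation. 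Both are short; yours would survive in contexts where no canonical form is available.
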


\begin{proof}
By the definition, $\pi \in A_{r,n}$ if and only if $\ell_{G_{r,n}}(\pi) \equiv 0 \pmod 2$. By the algorithm described above, $\ell_{G_{r,n}}(\pi)={\rm csum}(\pi)+k$, where $k$ is the number of generators $s_i$, for $i \neq 0$, used in the presentation of $\pi$. On the other hand, if we remove the appearances of $s_0$ from the presentation of $\pi$, we get $|\pi|$. Since lengths of different presentations of the same element of $S_n$ have the same parity, we have that $k \equiv {\rm inv}(|\pi|) ({\rm mod}\ 2)$, and therefore the criterion follows.
\end{proof}

\section{The presentation of $A_{r,n}$ and its length function}\label{proofs}
In \cite{BMR}, Dynkin-like diagrams were presented for the groups $G_{r,n}$. Such diagrams are based on a Coxeter-like presentation.
In this section, we compute a Coxeter-like presentation for $A_{r,n}$,
as well as a Dynkin-like diagram for the groups $A_{r,n}$.

\subsection{The presentation of $A_{r,n}$}

We start with the presentation of $A_{r,n}$.

\begin{thm}\label{thm_pres}
The set ${\mathcal A}=\{a_0,a_1^{\pm 1},\dots,a_{n-1}\}$ generates $A_{r,n}$, subject to the following relations:

\begin{enumerate}
\item $a_0^{\frac{r}{2}}=1$,
\item $a_1^4=1$,
\item $a_i^2=1$ for $i>1$,
\item $a_i a_j=a_j a_i$ for $|i-j|>1$ and $i,j \neq 1$,
\item $(a_ia_{i+1})^3=1$ for $i \geq 1$,
\item $(a_0a_1)^{2r}=1$,
\item $(a_0 a_1^{-1})^{2r}=1$,
\item $a_0 a_1^2 = a_1^2 a_0$,
\item $a_1 a_i =a_i a_1^{-1}$ for $i>2$.
\end{enumerate}

Denote by $\mathcal{R}$ the above set of relations.
\end{thm}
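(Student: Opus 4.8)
The plan is to combine von Dyck's theorem with an order count. Write $W=\langle {\mathcal A}\mid {\mathcal R}\rangle$ for the abstract group on the stated generators and relations, and let $\psi\colon W\to A_{r,n}$ be the homomorphism sending each abstract generator to the corresponding element $a_0=s_0^2$, $a_i=s_0^{\rh}s_i$ of $A_{r,n}$. Since ${\mathcal A}$ generates $A_{r,n}$ (Theorem~\ref{generate_thm}) and, as the first step below verifies, these elements satisfy every relation in ${\mathcal R}$, von Dyck's theorem yields a surjection $\psi\colon W\twoheadrightarrow A_{r,n}$. Hence $|W|\ge|A_{r,n}|=\frac{r^n n!}{2}$, and it suffices to establish the reverse inequality $|W|\le\frac{r^n n!}{2}$; equality then forces $\psi$ to be an isomorphism.

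First I would check that each relation in ${\mathcal R}$ genuinely holds in $A_{r,n}$. This is a direct computation inside $G_{r,n}$ from Presentation~\ref{presentation of grn}, using that $t:=s_0^{\rh}$ satisfies $t^2=s_0^r=1$ and commutes with every $s_i$ for $i\ge 2$ while not commuting with $s_1$. For instance $a_0^{\rh}=s_0^r=1$ gives (1); for $i\ge 2$ one has $a_i^2=t s_i t s_i=t^2 s_i^2=1$, giving (3); the commutations (4), the twisted commutation (9), and the braid relations (5) all descend from the relations among the $s_i$ together with $t^2=1$ and the centrality of $t$ relative to $s_i$ $(i\ge 2)$; and the remaining relations (2),(6),(7),(8), which couple $a_0$ with $a_1=ts_1$, follow from $(s_0s_1)^{2r}=1$ and $s_0^r=1$. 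The standing hypothesis $r=4k+2$ — under which $\frac{r+2}{4}\in\mathbb{Z}$ and the sets $C_i,O_j$ of Section~\ref{alg} are well-defined — is assumed throughout.

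For the upper bound I would show that, using only the relations ${\mathcal R}$, every element of $W$ can be brought into the canonical form of Theorem~\ref{generate_thm}, namely $\gamma_1\cdots\gamma_n\gamma_{n+1}\,o_{n-1}^{-1}\cdots o_1^{-1}$ with $\gamma_i\in C_i$ and $o_j\in O_j$. The number of such words was already computed, in the proof of Theorem~\ref{generate_thm}, to be exactly $\frac{r^n n!}{2}$; so once every element of $W$ admits such a representative we obtain $|W|\le\frac{r^n n!}{2}$, as required. Concretely I would argue by a collection procedure mirroring the algorithm of Section~\ref{alg}: relations (4), (5) and (9) let one rearrange and braid the reflections $a_1,\dots,a_{n-1}$; relations (1),(2),(3) bound the exponents of $a_0$ and $a_1$; and relations (6),(7),(8) govern the passage of the colour generator $a_0$ past $a_1$. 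Using these one pushes all ``coloring'' factors to the left of all ``ordering'' factors and normalizes each block into its prescribed set $C_i$ or $O_j$.

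The main obstacle is precisely the \emph{completeness} of this rewriting: one must verify that ${\mathcal R}$ suffices to carry out every reduction used by the algorithm, and that the resulting irreducible words are exactly the canonical decompositions — neither fewer (which would make $\psi$ non-injective) nor more. The delicate relations are the twisted ones (8) $a_0a_1^2=a_1^2a_0$ and (9) $a_1a_i=a_ia_1^{-1}$, and the two long relations (6),(7) $(a_0a_1^{\pm1})^{2r}=1$, which encode how the half-twist $t=s_0^{\rh}$ interacts with $a_1$; these are what force the sign $\varepsilon=\pm1$ on $a_1$ in the normal form and must be shown to reconcile all competing collection moves. The cleanest way to settle this is to organize the reductions into a rewriting system and invoke Newman's diamond lemma, checking that every overlap ambiguity between the relations resolves to a common word via ${\mathcal R}$; the irreducible words then match the canonical decompositions of Theorem~\ref{generate_thm} term by term, yielding $|W|\le\frac{r^n n!}{2}$ and completing the proof.
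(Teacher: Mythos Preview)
Your strategy is sound in principle but differs markedly from the paper's, and the hard step you flag is exactly the one the paper circumvents. You propose to bound $|W|$ from above by a rewriting/normal-form argument inside the abstract group, appealing to Newman's diamond lemma; this is legitimate, but you have not actually carried out the confluence check, and with relations (6)--(9) in play that verification is substantial. As written, the proposal is a plan rather than a proof: the ``main obstacle'' you identify remains unresolved.

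The paper avoids this entirely by a semidirect-product trick (following \cite{BRR}). Rather than bounding $|W|$ directly, it observes that the involution $\alpha$ sending $a_1^{\pm1}\mapsto a_1^{\mp1}$ and fixing the other $a_i$ preserves $\mathcal{R}$, hence acts on $W=A_{r,n}^+$. One then forms $A_{r,n}^+\rtimes\mathbb{Z}_2$ and writes down explicit mutually inverse homomorphisms between this group and $G_{r,n}$ (sending $\alpha\mapsto s_0^{r/2}$, $a_0\mapsto s_0^2$, $a_i\mapsto s_0^{r/2}s_i$, and conversely $s_0\mapsto\alpha a_0^{(r+2)/4}$, $s_i\mapsto\alpha a_i$). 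The well-definedness of these maps is a finite list of relation checks in the \emph{known} presentation of $G_{r,n}$, after which $A_{r,n}^+$ sits as an index-$2$ subgroup of $G_{r,n}$ and is forced to equal $A_{r,n}$. This buys you the order bound for free and replaces your diamond-lemma computation with a handful of short identities; your approach, by contrast, would give an explicit confluent rewriting system for $A_{r,n}$, which is more information but considerably more work.
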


\begin{rem}
Note that relation (5) implies $(a_1^{-1} a_2)^3=1$ too, and
relation (8) implies $a_1^{-1} a_0 a_1^{-1} = a_1 a_0 a_1$ and $a_1^{-1} a_0 a_1 = a_1 a_0 a_1^{-1}$.
\end{rem}

\begin{proof}
We have already shown in Theorem \ref{generate_thm} that $\mathcal A$ generates $A_{r,n}$. Here, we prove that the set $\mathcal{R}$ is the complete set of relations for $\grn$.

We imitate the idea of the proof of Proposition 2.1.1 in \cite{BRR}.

Consider the abstract group $A_{r,n}^+$ generated by the
elements
$${\mathcal A}=\{a_0,a_1,a_1^{-1},\dots, a_{n-1}\},$$
with $\mathcal{R}$ as the set of relations.  Note that the set
mapping $\alpha: {\mathcal A} \rightarrow A_{r,n}^+$ defined by:

$$\alpha(a_1^{\varepsilon})=a_1^{-\varepsilon} \text{ for }  \varepsilon \in \{-1,1\},$$
$$\alpha(a_i)=a_i \text{ for } i \in \{0,2,\dots,n-1\},$$
extends to a group automorphism $\alpha$ on $A_{r,n}^+$.
Indeed, considering $A_{r,n}$ as a subgroup of $G_{r,n}$, $\alpha$ is the inner automorphism
defined by the conjugation by $s_0^{\rh}$.

Thus, the group $\mathbb{Z}_2=\{1,\alpha\}$ acts on $A_{r,n}^+$ and we have the semidirect product $A_{r,n}^+ \rtimes \mathbb{Z}_2$, where the product is defined as follows:
$$(x_1 \alpha^i)\cdot (x_2 \alpha^j)=x_1\alpha^i(x_2) \cdot \alpha^{i+j}.$$
The semidirect product has the following presentation:
\begin{equation}
A_{r,n}^+ \rtimes \mathbb{Z}_2 = \langle \alpha,a_0,a_1,a_1^{-1},a_2,\dots,a_{n-1}\mid \bold{R},\alpha
a_i \alpha=\alpha(a_i) \mbox{ for all } i \rangle. \label{presentation of semidirect}
\end{equation}

We prove now that $G_{r,n} \cong A_{r,n}^+ \rtimes \mathbb{Z}_2$. In
order to do this, we define the following two homomorphisms, which are inverses of each other:
$$\rho : A_{r,n}^+ \rtimes \mathbb{Z}_2 \rightarrow G_{r,n},$$
defined on the generators by:
\begin{eqnarray*}
\alpha & \mapsto & s_0^{\rh} \\
a_0 & \mapsto & s_0^{2} \\
a_i & \mapsto & s_0^{\rh}s_i \mbox{ for } i \geq 1, \\
\end{eqnarray*}
and
$$\varphi :G_{r,n} \rightarrow A_{r,n}^+ \rtimes \mathbb{Z}_2,$$
defined on the generators by:
\begin{eqnarray*}
s_0 & \mapsto & \alpha a_0^{\frac{r+2}{4}} \\
s_i & \mapsto & \alpha a_i \mbox{ for } i \geq 1.\\
\end{eqnarray*}

It is easy to see that $\rho$ and $\varphi$ are isomorphisms, so we have that $G_{r,n} \cong A_{r,n}^+ \rtimes \mathbb{Z}_2$.

Now, since $\rho(\arn^+) \subseteq \arn$ and $\arn,\arn^+$ are both subgroups of $\grn$ of index $2$, they must be isomorphic.
\end{proof}

The relations defining $A_{r,n}$ can be graphically described by the following Dynkin-like diagram, where the numbers inside the circles are the orders of the corresponding generators, an edge without a label between two circles means that the order of the multiplication of the two corresponding generators is $3$, and an edge labeled $2r$ between two circles means that the order of the multiplication of the two corresponding generators is $2r$ (two circles with no connecting edge mean that the two corresponding generators commute):

\begin{figure}[!ht]
\epsfysize=3cm

\epsfbox{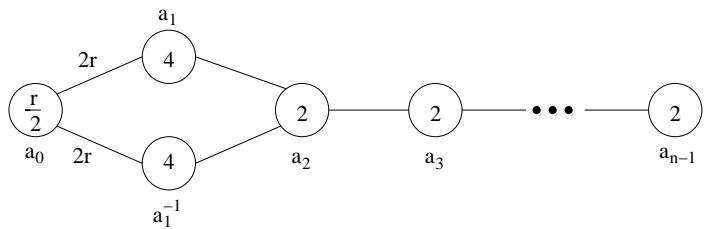}
\caption{Dynkin-like diagram of $A_{r,n}$} \label{Dynkin-like}
\end{figure}

\subsection{The length function}

Given a group $G$, generated by a set $A$, we denote by $\ell_A$ the length function on $G$ with respect to $A$. Explicitly, for each $\pi \in G$:
$$\ell_A(\pi)=\min\{u \mid g=a_1 \cdots a_u, \mbox{ where } a_i \in A\}.$$
In this section, we prove that the algorithm described above, indeed gives us a reduced word with respect to the set of generators $\mathcal A$. In other words, we prove that for each $\pi \in A_{r,n}$,
$\ell_{\mathcal A}(\pi)=L_A(\pi)$, where $L_A(\pi)$ is the number of generators in the presentation of $\pi$, obtained by the algorithm (and was computed in Theorem \ref{first con}(2)).

\medskip

We start with the following set of definitions:

\begin{defn}
\begin{itemize}
\item Let $\omega=b_0s_0^{i_1}b_1 s_0^{i_2}b_2 \cdots s_0^{i_k}b_ks_0^{i_{k+1}} b_{k+1}$ be {\it any} factorization of $\pi \in A_{r,n}$ as a product of the generators of $\mathcal S$, such that $b_i \in \left( {\mathcal S}-\{s_0\} \right)^*$ for $0 \leq i \leq k+1$.

\item Let  $n(\omega)$ be the number of generators $s_i$ in $\omega$ such that $i \neq 0$.

\item Let $\beta(\omega)=\sumlim_{u=1}^{k+1}({i_u \oslash 2})$.

\item Let $\mu(\pi)$ be the minimal value of $\beta(\omega)+n(\omega)$, obtained over all possible factorizations, $\omega$, of $\pi$ to $G_{r,n}$-generators as above.
\end{itemize}
\end{defn}

\begin{thm}
Let $\pi \in A_{r,n}$. Then: $\ell_{\mathcal A}(\pi)=\mu(\pi)$.
\end{thm}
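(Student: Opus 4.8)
The plan is to establish the two inequalities $\ell_{\mathcal A}(\pi)\le\mu(\pi)$ and $\mu(\pi)\le\ell_{\mathcal A}(\pi)$ separately. Throughout I rely on two elementary facts about the operator $\oslash$. First, the computations $2\oslash 2=1$ and $(r/2)\oslash 2=0$; the latter holds because $r/2=2k+1$ is odd, so $(r/2)\oslash 2=R_{r/2}\!\left(r/2\right)=0$. Second, a subadditivity coming from \eqref{oslash commutes}: since $(a+b)\oslash 2\equiv(a\oslash 2)+(b\oslash 2)\pmod{r/2}$ and all three quantities are representatives in $\{0,\dots,r/2-1\}$, reduction mod $r/2$ can only subtract a multiple of $r/2$, whence $(a+b)\oslash 2\le(a\oslash 2)+(b\oslash 2)$.

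For $\ell_{\mathcal A}(\pi)\le\mu(\pi)$, I would fix a factorization $\omega=b_0\,s_0^{i_1}b_1\cdots s_0^{i_{k+1}}b_{k+1}$ of $\pi$ into $\mathcal S$-generators realizing $\mu(\pi)=\beta(\omega)+n(\omega)$, and translate it into a word in $\mathcal A$. The cleanest bookkeeping runs through the isomorphism $\varphi\colon G_{r,n}\to A_{r,n}^+\rtimes\mathbb{Z}_2$ of Theorem \ref{thm_pres}, under which $\varphi(s_l)=\alpha a_l$ for $l\ge 1$ and $\varphi(s_0^{z})=\alpha^{z}a_0^{z\oslash 2}$; the exponent identity $z(r+2)/4\equiv z\oslash 2\pmod{r/2}$ is the modular computation underlying the whole translation. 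Applying $\varphi$ to $\omega$ produces a product in which each non-$s_0$ generator contributes a single letter $a_l$ and each block $s_0^{i_u}$ contributes $a_0^{i_u\oslash 2}$, interleaved with powers of $\alpha$. Since $\pi\in A_{r,n}$ its $\mathcal S$-length $n(\omega)+\sum_u i_u$ is even, so the total number of $\alpha$'s is even; carrying them to one end only conjugates (swapping $a_1\leftrightarrow a_1^{-1}$ and fixing the other generators) and then cancels, leaving a genuine word in $\mathcal A$ with exactly $\sum_u(i_u\oslash 2)=\beta(\omega)$ occurrences of $a_0$ and exactly $n(\omega)$ occurrences of the remaining generators. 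Hence $\ell_{\mathcal A}(\pi)\le\beta(\omega)+n(\omega)=\mu(\pi)$.

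For $\mu(\pi)\le\ell_{\mathcal A}(\pi)$, I would take a shortest $\mathcal A$-word $\pi=c_1\cdots c_L$ with $L=\ell_{\mathcal A}(\pi)$ and substitute the defining identities $a_0=s_0^{2}$, $a_i=s_0^{r/2}s_i$ and $a_1^{-1}=s_1 s_0^{r/2}$ to obtain an $\mathcal S$-factorization $\omega$ of $\pi$. Were the $s_0$-runs arising from distinct letters to remain separate, each letter would contribute exactly $1$ to $\beta(\omega)+n(\omega)$: an $a_0$ contributes $2\oslash 2=1$ to $\beta$ and $0$ to $n$, while each $a_i^{\pm 1}$ with $i\ge 1$ contributes $(r/2)\oslash 2=0$ to $\beta$ and $1$ to $n$. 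Adjacent substitutions may fuse their $s_0$-blocks, but by the subadditivity of $\oslash$ noted above this can only decrease $\beta(\omega)$, while $n(\omega)$ is unaffected. Therefore $\beta(\omega)+n(\omega)\le L$, giving $\mu(\pi)\le L=\ell_{\mathcal A}(\pi)$.

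Combining the two inequalities yields $\ell_{\mathcal A}(\pi)=\mu(\pi)$. The step I expect to be the most delicate is the first inequality: one must verify the exponent identity $\varphi(s_0^{z})=\alpha^{z}a_0^{z\oslash 2}$ and, more importantly, check carefully that moving the $\alpha$'s to the end of the word never alters the \emph{number} of $a$-letters (it only flips signs on $a_1$), so that the translation of an \emph{arbitrary}, not necessarily canonical, factorization $\omega$ has length precisely $\beta(\omega)+n(\omega)$. An alternative to the $\varphi$-bookkeeping is to perform the translation directly by inserting $s_0^{r/2}s_0^{r/2}=1$ between consecutive pairs of generators exactly as in Section \ref{Translation}, which applies verbatim to any even-length word since $\pi\in A_{r,n}$ forces $n(\omega)+\sum_u i_u$ to be even.
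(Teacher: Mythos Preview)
Your proof is correct and follows essentially the same route as the paper: both directions are obtained by translating words back and forth via the maps $\rho$ and $\varphi$ from Theorem~\ref{thm_pres}, and then counting letters. Your verification of the exponent identity $\varphi(s_0^{z})=\alpha^{z}a_0^{z\oslash 2}$ and of the fact that pushing the $\alpha$'s to one end only swaps $a_1\leftrightarrow a_1^{-1}$ is exactly what the paper relies on (implicitly) in its second paragraph.

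The one genuine difference is in the direction $\mu(\pi)\le\ell_{\mathcal A}(\pi)$. The paper simply asserts that the $\rho$-image $\eta$ of a reduced $\mathcal A$-word satisfies $\beta(\eta)+n(\eta)=\ell_{\mathcal A}(\pi)$; this equality is true but needs the observation that in a \emph{reduced} word no maximal $a_0$-run has length $\ge r/2$, so no wrap-around mod $r/2$ occurs. You instead invoke the subadditivity $(a+b)\oslash 2\le(a\oslash 2)+(b\oslash 2)$ to obtain $\beta(\eta)+n(\eta)\le\ell_{\mathcal A}(\pi)$, which is all that is required and sidesteps that extra check. This is a small but real tightening of the argument.
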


\begin{proof}
Let $\pi \in A_{r,n}$. We start by proving the inequality: $\ell_{\mathcal A}(\pi) \geq \mu(\pi)$. Let $\omega$ be a reduced word in generators from $\mathcal A$.
Apply the map $\rho$, defined in the proof of Theorem \ref{thm_pres} above, which sends $a_i$ to $s_0^{\rh}s_i$ for $i \in \{1,\dots,n-1\}$, $a_1^{-1}$ to $s_1s_0^{\rh}$, and $a_0$ to $s_0^2$ on each letter separately, and concatenate.

This yields a word $\eta$, factorizing $\pi$ in $G_{r,n}$-generators, satisfying: $\ell_{\mathcal A}(\pi)=\beta(\eta)+n(\eta)$. This implies that the minimal value of $\mu$ over {\it all} the factorizations of $\pi$ to $G_{r,n}$-generators is at most $\ell_{\mathcal A}(\pi)$, since $\mu (\pi)$ is defined as the minimal value of all such expressions. So, we have: $\ell_{\mathcal A}(\pi) \geq \mu(\pi)$ (see Example \ref{exam_sec5}(a) below).

\medskip

In order to prove the opposite inequality: $\mu(\pi) \geq \ell_A(\pi)$, let $\omega$ be a factorization of $\pi$ to $G_{r,n}$-generators which achieves the minimal value of $\mu(\pi)$. Apply the map $\varphi$, defined in the proof of Theorem \ref{thm_pres}, which sends $s_0$ to $\alpha  a_0^{\frac{r+2}{4}}$, and for all $i \geq 1$ sends $s_i$ to $\alpha a_i$ on each letter separately, and
concatenate.  This gives us a  factorization $\eta$ of $\pi$ in ${\mathcal A} \cup \{\alpha\}$-generators, having $n(\omega)$ generators $a_i$ with $i \neq 0$ and $\beta(\omega)$ occurrences of $a_0$. The factorization $\eta$ contains also an even number of occurrences of the letter $\alpha$ (one occurrence for each $s_i$ for $i \geq 0$; recall that the number of $s_i$'s is even by definition).  By using the relations $\alpha a_i \alpha=a_i$ for $1 \leq i \leq n$ and $\alpha a_1^{-1} \alpha=a_1^{-1}$, we can cancel out all occurrences of $\alpha$ and we have an ${\mathcal A}^*$-word (i.e. a word written using generators from $\mathcal A$) factorization of $\pi$ of length $\mu(\pi)$ (see Example \ref{exam_sec5}(b) below). This proves that  $\mu(\pi) \geq \ell_{\mathcal A}(\pi)$.
\end{proof}

\begin{exa}\label{exam_sec5} \ \\
(a) We illustrate the proof of the first direction of the above proof: Let  $\pi=\left( 1^{[5]} 3^{[3]} 2^{[3]} 4^{[0]} \right) \in A_{6,4}$. The $\mathcal{A}^*$-word  $w=a_0a_1^{-1}a_2 a_1a_2a_1^{-1}$ is a reduced factorization of $\pi$. Apply the map $\rho$ on $\omega$ to get: $$\eta=s_0^2s_1s_0^3s_0^3s_2s_0^3s_1s_0^3s_2s_1s_0^3.$$ 
One can calculate that:
\begin{eqnarray*}
\beta(\eta)+n(\eta) &=&(2 \oslash 2)+(6 \oslash 2)+ (3 \oslash 2)+ (3 \oslash 2)+(3 \oslash 2)+5 = \\
& = & 1+0+0+0+0+5=6=\ell_{\mathcal A}(\pi).\\
\end{eqnarray*}

\medskip

\noindent
(b) In this part, we illustrate the derivation described in the second direction of the above proof: Let $\pi=\left( 1^{[2]} 2^{[0]} 4^{[0]} 3^{[1]} \right)$, and let $\omega=s_0^2 s_2 s_1 s_0 s_1 s_2 s_3$ be a factorization of $\pi$ (here $\beta(\omega)+n(\omega)$ is indeed minimal). Then:
$$\varphi(\omega)=\alpha a_0^2 \alpha a_0^2 \alpha a_2 \alpha a_1 \alpha a_0^2 \alpha a_1 \alpha a_2 \alpha a_3=\eta.$$
Now, after canceling out the appearances of $\alpha$, we get the ${\mathcal A}^*$-word:
$$a_0^2 a_0^2 a_2 a_1^{-1} a_0^2  a_1^{-1}  a_2  a_3=a_0 a_2 a_1^{-1} a_0^2  a_1^{-1}  a_2  a_3.$$
\end{exa}

\begin{thm}
Let $\pi \in A_{r,n}$. Then:
$$L_A(\pi)=\mu(\pi).$$
\end{thm}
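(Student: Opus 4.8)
\noindent\emph{Sketch of the intended argument.}
The plan is to leverage the previous theorem, which already establishes $\ell_{\mathcal A}(\pi)=\mu(\pi)$; it therefore suffices to prove $L_A(\pi)=\ell_{\mathcal A}(\pi)$, i.e. that the $\mathcal A$-word produced by the algorithm (after deleting the redundant powers of $a_0$) is reduced. One of the two inequalities is free: since $L_A(\pi)$ is by definition the length of one particular $\mathcal A$-word representing $\pi$, we have $\ell_{\mathcal A}(\pi)\le L_A(\pi)$, and hence $\mu(\pi)\le L_A(\pi)$. The entire content is thus the reverse inequality $L_A(\pi)\le\mu(\pi)=\ell_{\mathcal A}(\pi)$.

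To prove it I would show that $L_A$ is itself a lower bound for the word length in the generators $\mathcal A$. Using the closed formula of Theorem \ref{first con}(2),
$$L_A(\pi)=\sumlim_{z_i(\pi)\ne 0}(i-1)+{\rm inv}(\pi)+\sumlim_{i=1}^{n}\bigl(z_i(\pi)\oslash 2\bigr),$$
the goal is to verify $L_A(1)=0$ together with the one-step estimate $L_A(\sigma g)\le L_A(\sigma)+1$ for every $\sigma\in\arn$ and every generator $g\in\mathcal A=\{a_0,a_1^{\pm1},a_2,\dots,a_{n-1}\}$. Granting this, a trivial induction along any $\mathcal A$-word $g_1\cdots g_m=\pi$ gives $L_A(\pi)\le m$; choosing a reduced word yields $L_A(\pi)\le\ell_{\mathcal A}(\pi)$, which combined with the inequality above forces $L_A(\pi)=\ell_{\mathcal A}(\pi)=\mu(\pi)$.

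The first step is to record the effect of right multiplication by a generator. Writing $a_i=s_0^{\rh}s_i$ and $a_0=s_0^{2}$ and using the multiplication rule of $\grn$, one sees that $\sigma g$ is obtained from $\sigma$ by adding a fixed amount to the color of a single value (equal to $\rh$ for every generator other than $a_0$, and to $2$ for $a_0$) and, for the generators other than $a_0$, transposing two adjacent positions. The three summands of $L_A$ then move as follows. By the additivity of $\oslash 2$ modulo $\rh$ (see \eqref{oslash commutes}) together with $\rh\oslash 2=0$ and $2\oslash 2=1$, the color term $\sumlim(z_i\oslash 2)$ is unchanged when $g\ne a_0$ and changes by $+1$ (up to a boundary wrap when the color reaches the top of its range) when $g=a_0$. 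The adjacent transposition moves ${\rm inv}$ by $\pm1$ exactly as in $\grn$, and the support term $\sumlim_{z_i\ne0}(i-1)$ changes only when the shifted color crosses $0$.

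The main obstacle is precisely this ``boundary crossing''. Shifting a color by $\rh$ (or by $2$) can flip the colored/uncolored status of the affected value, namely when its old color lies in $\{0,\rh\}$ (respectively in $\{0,r-2\}$). Such a flip moves the corresponding entry across the gap separating the colored block from the uncolored block of the length order, so it can alter ${\rm inv}(\sigma)$ by as much as $\Theta(n)$ and, simultaneously, change $\sumlim_{z_i\ne0}(i-1)$ by $i-1$. The crux is to show that these two large variations cancel up to the single unit coming from the color term, leaving a net change of at most $1$. I expect to carry this out using the fact that, for a colored permutation, ${\rm inv}$ depends only on the underlying permutation and on the set of colored values, not on the actual nonzero colors: tracking how the flipped entry is compared, position by position, with the colored and the uncolored entries lets one match the change in ${\rm inv}$ against the change in $\sumlim_{z_i\ne0}(i-1)$ term by term. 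An alternative would be to start from a word realizing $\mu(\pi)$ and rewrite it into the canonical $\mathcal S$-form produced by the algorithm using the defining relations of $\grn$ (Presentation \ref{presentation of grn}), checking that no relation raises the quantity $\beta(\cdot)+n(\cdot)$; but the generator-by-generator estimate looks cleaner and more robust.
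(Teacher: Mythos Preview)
Your strategy is sound and genuinely different from the paper's. The paper disposes of the hard inequality $L_A(\pi)\le\mu(\pi)$ in a single sentence: it asserts that a factorization $\omega$ with $n(\omega)+\beta(\omega)<L_A(\pi)$ would contradict the known $\grn$-length formula (i.e., the reducedness of the canonical $\mathcal S$-word), without spelling out how a short $(n+\beta)$-value produces a short $\grn$-word. Your route avoids this by working directly with the closed formula for $L_A$ and proving the subadditivity $L_A(\sigma g)\le L_A(\sigma)+1$ for each $g\in\mathcal A$; this is longer but self-contained and transparently correct.

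The case analysis you outline does go through. The observation that ${\rm inv}(\pi)$ depends only on $|\pi|$ and on the \emph{set} $\{i:z_i(\pi)\neq 0\}$ (never on the actual nonzero colors, since two distinct digits are compared only by absolute value once both are colored) is the right lever. One point worth making explicit when you write it up: for $g=a_i$ with $i\ge 1$, the color shifted by $r/2$ is always the color of the digit sitting in \emph{position~$1$}. Hence when that digit $v=\tau(1)$ flips its colored/uncolored status, every other digit lies to its right, and the change in ${\rm inv}$ coming from pairs involving position~$1$ is exactly $\mp(v-1)$, which cancels the $\pm(v-1)$ from the support term; the residual $\pm1$ then comes from the adjacent swap (or, for $a_1^{\pm1}$, from the pair $(1,2)$ itself). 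For $g=a_0$ the same cancellation handles the transitions $z_v=0\to 2$ and $z_v=r-2\to 0$, while the ``wrap'' case $z_v\oslash 2=\tfrac r2-1$ only \emph{decreases} $L_A$, so the bound $\le +1$ is safe. With these checks in place your induction gives $L_A(\pi)\le\ell_{\mathcal A}(\pi)=\mu(\pi)$, and you already have the reverse inequality.

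What your approach buys is an argument that does not depend on understanding why the paper's appeal to the $\grn$-length formula suffices; what the paper's approach buys (if one accepts the appeal) is brevity.
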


\begin{proof}
Let $\pi \in A_{r,n}$. The canonical decomposition of $\pi$ demonstrates the inequality  $L_A(\pi) \geq \mu(\pi)$. For the opposite inequality, assume to the contrary that there is some decomposition $\omega$ of $\pi$ satisfying: 
$$n(\omega)+\beta(\omega)< L_A(\pi).$$ 
This contradicts the fact that Equation (\ref{length wrt grn}) is the length of $\pi$ with respect to the usual generators of $\grn$.
\end{proof}

Hence, we have obtained the following corollary, which is the main result of this section:
\begin{cor}
The function $L_A$ is indeed the length function of the group of colored alternating permutations with respect to the set of generators ${\mathcal A}$. Explicitly: for each $\pi \in A_{r,n}$,
$$\ell_{\mathcal A}(\pi)=L_A (\pi).$$
\end{cor}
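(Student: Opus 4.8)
The plan is to obtain the corollary as an immediate consequence of the two theorems established just above, both of which express a length-type quantity in terms of the auxiliary invariant $\mu(\pi)$. Those theorems give, for every $\pi \in A_{r,n}$, the equalities $\ell_{\mathcal A}(\pi)=\mu(\pi)$ and $L_A(\pi)=\mu(\pi)$. Chaining them yields $\ell_{\mathcal A}(\pi)=\mu(\pi)=L_A(\pi)$, which is exactly the assertion. Since $\mu(\pi)$ was defined intrinsically as a minimum over \emph{all} factorizations of $\pi$ into $G_{r,n}$-generators, it serves as a common bridge between the abstract length $\ell_{\mathcal A}$ (a minimum over $\mathcal A$-words) and the algorithmic count $L_A$ (the number of generators produced by the canonical decomposition), so no further computation is needed.

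First I would recall the role of each ingredient feeding the two bridging theorems, to make clear that the transitive step is legitimate. The identity $\ell_{\mathcal A}(\pi)=\mu(\pi)$ was obtained by pushing an $\mathcal A$-word through $\rho$ into a $G_{r,n}$-word (yielding $\ell_{\mathcal A}(\pi)\geq\mu(\pi)$) and pushing a $\mu$-minimizing $G_{r,n}$-word through $\varphi$ and cancelling the resulting $\alpha$'s (yielding $\mu(\pi)\geq\ell_{\mathcal A}(\pi)$). The identity $L_A(\pi)=\mu(\pi)$ was obtained by observing that the canonical decomposition realizes the value $\mu(\pi)$ as an upper bound, while any strictly shorter factorization would contradict the known closed form for $\ell_{G_{r,n}}$ used in Theorem \ref{first con}. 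Granting both, the corollary is formal.

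The genuine content therefore lies in the two preceding theorems rather than in the corollary itself, so the main obstacle has already been surmounted there, namely establishing the lower bound $\ell_{\mathcal A}(\pi)\geq L_A(\pi)$. Were one to attempt a direct argument bypassing $\mu$, the upper bound $\ell_{\mathcal A}(\pi)\leq L_A(\pi)$ would be trivial, since the algorithm exhibits an explicit $\mathcal A$-word of length $L_A(\pi)$; but the matching lower bound would require ruling out \emph{every} shorter $\mathcal A$-word, and this is precisely the difficulty that the invariant $\mu$ was introduced to absorb, by transferring the question to $G_{r,n}$-factorizations where the established formula $\ell_{G_{r,n}}(\pi)=\sum_{z_i(\pi)\neq 0}(i-1)+{\rm inv}(\pi)+\sum_{i=1}^n z_i(\pi)$ can be invoked. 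Accordingly, my proof of the corollary itself reduces to a single transitive substitution through $\mu(\pi)$.
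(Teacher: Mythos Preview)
Your proposal is correct and matches the paper's approach exactly: the corollary is stated without proof in the paper, immediately after the two theorems giving $\ell_{\mathcal A}(\pi)=\mu(\pi)$ and $L_A(\pi)=\mu(\pi)$, and is meant to follow by the same transitive chaining through $\mu(\pi)$ that you describe. Your additional paragraphs recapitulating how those two theorems were proved are accurate but unnecessary for the corollary itself.
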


Consequently, we can easily get the generating function for the length function:
\begin{cor}\label{gen fun}.
The generating function for the length function $\ell_{\mathcal A}$ is:
$$\sumlim_{\pi \in A_{r,n}}{q^{\ell_{\mathcal A}(\pi)}} = \frac{1}{2}[n]!_q\prod\limits_{j=1}^{n}\left({1+q^{j-1}(1+2q+\cdots+2q^{\frac{r}{2}-1})}\right).$$
\end{cor}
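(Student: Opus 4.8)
The plan is to derive the generating function directly from the closed formula for the length function established in Theorem~\ref{first con}(2), namely
$$\ell_{\mathcal A}(\pi)=\sumlim_{z_i(\pi)\neq 0}(i-1)+{\rm inv}(\pi)+\sumlim_{i=1}^n\left(z_i(\pi)\oslash 2\right),$$
together with the criterion of Theorem~\ref{criteria} that characterizes membership in $A_{r,n}$ by the parity condition ${\rm csum}(\pi)+{\rm inv}(|\pi|)\equiv 0\pmod 2$. The natural strategy is to factor the sum over $\pi\in A_{r,n}$ according to the underlying permutation $|\pi|=\tau\in S_n$ and the color vector $\vec z$, so that
$$\sumlim_{\pi\in A_{r,n}}q^{\ell_{\mathcal A}(\pi)}=\sumlim_{(\vec z,\tau)}q^{{\rm inv}(\tau)}\cdot q^{\sum_{z_i\neq 0}(i-1)}\cdot q^{\sum_i(z_i\oslash 2)},$$
where the sum ranges only over pairs satisfying the parity constraint.

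First I would treat the hypothetical ``full'' group $G_{r,n}$ sum, where no parity constraint is imposed: there the color contributions decouple completely from the permutation, and the total factors as $\left(\sumlim_{\tau\in S_n}q^{{\rm inv}(\tau)}\right)$ times a product over digits. Since $\sum_{\tau\in S_n}q^{{\rm inv}(\tau)}=[n]!_q$, and for each digit $i$ the color $z_i$ contributes the factor $\sum_{z=0}^{r-1}q^{\,\mathbf{1}[z\neq 0](i-1)}q^{z\oslash 2}$, one computes using the definition of $\oslash 2$ that $\sum_{z=0}^{r-1}q^{z\oslash 2}=1+2q+\cdots+2q^{\frac r2-1}$ (the value $0$ appears once from $z=0$, while each nonzero residue in $\mathbb Z_{r/2}$ is hit exactly twice). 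Inserting the $q^{i-1}$ weight for nonzero colors gives precisely the factor $1+q^{i-1}(1+2q+\cdots+2q^{\frac r2-1})$, so the \emph{unconstrained} sum is $[n]!_q\prodlim_{j=1}^n\left(1+q^{j-1}(1+2q+\cdots+2q^{\frac r2-1})\right)$.

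The remaining and genuinely essential step is to account for the factor $\tfrac12$, i.e.\ to show that restricting to $A_{r,n}$ exactly halves the generating polynomial. I would argue that the parity constraint ${\rm csum}(\pi)+{\rm inv}(\tau)\equiv 0\pmod 2$ partitions $G_{r,n}$ into two fibers of equal generating function, and that the length $\ell_{\mathcal A}$ is blind to which fiber a given monomial comes from in the following precise sense: the constraint couples only the \emph{parity} of ${\rm csum}$ to the fixed parity of ${\rm inv}(\tau)$, and for any fixed $\tau$ one can pair up color vectors of opposite ${\rm csum}$-parity via a sign-flip on a single coordinate, a bijection that preserves the length generating weight. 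The cleanest route is to observe that $A_{r,n}$ is an index-$2$ subgroup cut out by the character $\varphi$, so the indicator $\mathbf 1[\pi\in A_{r,n}]=\tfrac12(1+\varphi(\pi))$; thus the desired sum equals $\tfrac12\sum_{G_{r,n}}q^{\ell_{\mathcal A}(\pi)}+\tfrac12\sum_{G_{r,n}}\varphi(\pi)q^{\ell_{\mathcal A}(\pi)}$, and the main task becomes showing the signed sum vanishes. The hard part will be this vanishing: I expect to prove it by exhibiting, for each $\tau\in S_n$, a weight-preserving sign-reversing involution on color vectors (for instance toggling $z_n$ between a fixed value and its complement, or adding $1$ to a suitable coordinate), checking that it flips $\varphi$ while leaving both ${\rm inv}(\tau)$ and the color-length contribution $\sum_i(z_i\oslash 2)+\sum_{z_i\neq 0}(i-1)$ invariant up to an overall factor that can be absorbed. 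Once the signed sum is shown to be zero, the corollary follows immediately by substituting the unconstrained product computed above.
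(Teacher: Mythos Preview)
Your route via the character indicator $\tfrac12(1+\varphi(\pi))$ is genuinely different from the paper's, which argues directly from the canonical decomposition: the paper converts each coloring factor $1+q^{j-1}(1+q+\cdots+q^{r-1})$ of the known $G_{r,n}$ generating function into $1+q^{j-1}(1+2q+\cdots+2q^{r/2-1})$ via the $2:1$ map $z\mapsto z\oslash 2$, and then asserts the $\tfrac12$ on the grounds that for each coloring only one parity of the ordering part is admissible. Your signed-sum approach makes the $\tfrac12$ step more explicit and verifiable, but two points in the proposal need repair.

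First, the quantity in Theorem~\ref{first con}(2) is ${\rm inv}(\pi)$, the inversion count with respect to the \emph{length order} on colored letters, not ${\rm inv}(\tau)={\rm inv}(|\pi|)$. It genuinely depends on $\vec z$, so your ``decoupling'' claim is not literally true. The factorization of the unconstrained sum as $[n]!_q$ times a product over digits is still correct, but it comes from the canonical decomposition: for each fixed $\vec z$ the ordering part $o$ ranges bijectively over $S_n$ as $\tau$ does, with ${\rm inv}(\pi)=\ell_{S_n}(o)$, whence $\sum_{\tau}q^{{\rm inv}(\pi)}=[n]!_q$. You cannot simply separate variables in the formula.

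Second, neither of your proposed involutions works: toggling $z_n$ alters the summand $(n-1)\cdot\mathbf 1[z_n\neq 0]$, and adding $1$ to a coordinate changes $z\oslash 2$. The involution that does the job is $z_1\mapsto z_1+\tfrac r2 \pmod r$ with $\tau$ fixed. Since $\tfrac r2$ is odd (recall $r=4k+2$), this flips the parity of ${\rm csum}$ and hence $\varphi$. It preserves $L_A(\pi)$ because: (i) $(z_1+\tfrac r2)\oslash 2=z_1\oslash 2$, since $(\tfrac r2)\oslash 2=0$ by Equation~(\ref{oslash commutes}); (ii) the $i=1$ term of $\sum_{z_i\neq 0}(i-1)$ is zero regardless of $z_1$; and (iii) crucially, ${\rm inv}(\pi)$ is unchanged, because in the length order the comparison of $1^{[c]}$ against any $j^{[d]}$ with $j>1$ depends only on whether $d=0$, never on $c$. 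With this specific involution the signed sum vanishes and your argument goes through.
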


\begin{proof}
Each $\pi \in A_{r,n}$ has a canonical decomposition
into generators from the set $\mathcal S$. By Theorem 4.4 of \cite{B}, the generating function for the
length with respect to $\mathcal S$ over the whole group $\grn$ is:
$$\sumlim_{\pi \in G_{r,n}}{q^{\ell_{\mathcal S}(\pi)}} = [n]!_q\prod\limits_{j=1}^{n}\left({1+q^{j-1}(1+q+\cdots+q^{r-1})}\right).$$
By Theorem \ref{first con}(1), the factor $s_{j-1} \cdots s_1 s_0^z$ which colors the digit $j$ by $z$ colors is converted to $a_{j-1} \cdots a_1^{\pm 1} a_0^{z \oslash 2}$. Since the mapping $\Z_r \to \Z_{\rh}$ is a 2:1-epimorphism, the factor $\left({1+q^{j-1}(1+q+\cdots+q^{r-1})}\right)$ is converted to $\left({1+q^{j-1}(1+2q+\cdots+2q^{\frac{r}{2}-1})}\right)$.
Finally, after completing the coloring part, only half of the permutations of $S_n$ are permitted (since the total length of the word in $\grn$-generators should be even), so we have to divide the generating function by $2$.
\end{proof}

\section{The colored alternating group as a covering group}\label{cover}
In \cite{RR}, a covering map $f:A_{n+1} \rightarrow S_n$ was defined and used to lift some identities of $S_n$ to $A_{n+1}$. In this section, we use a similar technique with a covering map from $A_{r,n}$ to $G_{\rh,n}$. Unlike the case of $A_{n+1}$, this map is an epimorphism, and hence the kernel of this map will be combinatorially described. We also present a section $s:G_{\rh,n} \rightarrow \arn$ which gives us a way to decompose the length function, $\ell_{\mathcal A}(\pi)$, into two summands, one of them is constant on the coset of $\pi$, while the other, which will be called the {\it fibral length}, varies over the coset.
We present a nice combinatorial interpretation of the last parameter, as well as a generating function for it over each coset.  In Section \ref{perm stat}, we use this covering map to lift some identities and permutation statistics from $G_{\rh,n}$ to $A_{r,n}$.

\medskip

Define the following projection:
$$p:A_{r,n} \rightarrow G_{\frac{r}{2},n},$$
as follows: if $\pi=\left(b_1^{[c_1]} \cdots b_n^{[c_n]}\right)$, then:
$$p(\pi)=\left(b_1^{[c_1 \oslash 2]} \cdots b_n^{[c_n \oslash 2]}\right).$$

\begin{exa}
Let $\pi=\left( 3^{[0]} 2^{[1]} 4^{[2]} 1^{[3]} \right) \in G_{6,4}$. Then: $$p(\pi)=\left( 3^{[0 \oslash 2]} 2^{[1 \oslash 2]} 4^{[2 \oslash 2]} 1^{[3 \oslash 2]} \right)= \left( 3^{[0]} 2^{[2]} 4^{[1]} 1^{[0]} \right).$$
\end{exa}

Then, we have:
\begin{lem}
The map $p$ is an epimorphism. Moreover, the kernel of $p$ is the
normal closure of $a_1^2$ in $A_{r,n}$.
Thus: $G_{\rh,n} \cong \frac{A_{r,n}} {\ll a_1^2 \gg}$.
\end{lem}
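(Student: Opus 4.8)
The plan is to verify that $p$ is a well-defined group homomorphism, then show surjectivity, and finally identify the kernel. For well-definedness and the homomorphism property, I would observe that $p$ is essentially induced by the reduction map $\mathbb{Z}_r \to \mathbb{Z}_{\rho}$ applied to the colors. More precisely, recall from Equation (\ref{oslash commutes}) that the operator $\oslash 2$ commutes with addition modulo $\rho$. Since the multiplication in $A_{r,n} \subseteq G_{r,n}$ adds colors componentwise (after the appropriate $\tau^{-1}$ permutation of indices, exactly as in the wreath product multiplication rule), this compatibility of $\oslash 2$ with addition is precisely what guarantees that $p$ respects products. So first I would write $\pi=(\vec{z},\tau)$ and $\pi'=(\vec{z}',\tau')$ and check that $p(\pi \cdot \pi')=p(\pi)\cdot p(\pi')$ by applying (\ref{oslash commutes}) coordinatewise; the underlying $S_n$-part $\tau$ is untouched by $p$, so the only thing to verify is the color arithmetic.

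For surjectivity, I would exhibit a preimage for each element of $G_{\rho,n}$. Given any $(\vec{w},\tau)\in G_{\rho,n}$, I need to produce $(\vec{z},\tau)\in A_{r,n}$ with $z_i \oslash 2 = w_i$ for each $i$ and with the element lying in the kernel of $\varphi$ (equivalently, by Theorem \ref{criteria}, satisfying ${\rm csum}(\pi)+{\rm inv}(|\pi|)\equiv 0 \pmod 2$). Since $\oslash 2$ is a $2{:}1$ map onto $\mathbb{Z}_{\rho}$, each target color $w_i$ has exactly two preimages in $\mathbb{Z}_r$ differing in parity, so I have enough freedom to adjust the parity of ${\rm csum}$ to match ${\rm inv}(|\pi|)$ and land inside $A_{r,n}$. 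This parity-adjustment is where the $2^{n-1}$-cover structure (mentioned in the introduction) becomes visible, and I would make this explicit.

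The heart of the lemma — and the step I expect to be the main obstacle — is the identification of the kernel with the normal closure $\ll a_1^2 \gg$. I would first check the easy inclusion $\ll a_1^2 \gg \subseteq \ker p$: since $a_1 = s_0^{\rho}s_1$, one computes that $a_1^2$ is a pure color element (of color $r/2$ in the appropriate coordinate) that is sent by $p$ to the identity, because $(\rho)\oslash 2 = 0$ in $\mathbb{Z}_\rho$; being in the kernel, which is normal, its normal closure lies in $\ker p$. The reverse inclusion $\ker p \subseteq \ll a_1^2 \gg$ is the delicate part: I would use the presentation of Theorem \ref{thm_pres} together with the canonical decomposition of Theorem \ref{generate_thm}. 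Working in the quotient $A_{r,n}/\ll a_1^2 \gg$, relation (2), $a_1^4=1$, collapses to $a_1^2=1$, so $a_1^{-1}=a_1$ there; this forces the two generators $a_1$ and $a_1^{-1}$ to be identified and kills exactly the parity ambiguity in the coloring part of the canonical word. I would then argue that the resulting quotient presentation is precisely a Coxeter-like presentation for $G_{\rho,n}$, so that the induced map $A_{r,n}/\ll a_1^2 \gg \to G_{\rho,n}$ is an isomorphism, forcing $\ker p = \ll a_1^2 \gg$.

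A cleaner alternative for this last step, which I would likely prefer to avoid the presentation bookkeeping, is a counting argument: since $|A_{r,n}| = \tfrac{r^n n!}{2}$ and $|G_{\rho,n}| = (\rho)^n n! = \bigl(\tfrac{r}{2}\bigr)^n n!$, the kernel of the epimorphism $p$ has order $\tfrac{|A_{r,n}|}{|G_{\rho,n}|} = \tfrac{r^n n!/2}{(r/2)^n n!} = 2^{n-1}$. It then suffices to show that the normal closure $\ll a_1^2 \gg$ has order exactly $2^{n-1}$ and is contained in $\ker p$; combined with the easy inclusion, equality of finite groups of the same order follows. To compute $|\ll a_1^2 \gg|$, I would note that conjugating $a_1^2$ by the generators produces the elements $a_i a_1^2 a_i = a_i^2 \cdots$ which, using relations (3), (4), (8), (9) of Theorem \ref{thm_pres}, are the $n$ "color-$\rho$ flip" elements generating an elementary abelian $2$-group whose product of all flips is trivial (the total color $\rho \cdot n \equiv 0$ constraint), giving rank $n-1$ and order $2^{n-1}$. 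Verifying that these conjugates indeed commute and satisfy exactly one relation is the routine-but-essential calculation I would carry out to pin down the order.
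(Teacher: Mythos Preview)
Your treatment of the homomorphism property (via Equation~(\ref{oslash commutes})) and of surjectivity (via the $2{:}1$ nature of $\oslash 2$ together with the parity criterion of Theorem~\ref{criteria}) matches the paper's proof, as does the easy inclusion $\ll a_1^2\gg\subseteq\ker p$.

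The gap is in your proposed computation of $|\ll a_1^2\gg|$. Your claim that conjugating $a_1^2$ by the generators yields ``the $n$ color-$\tfrac{r}{2}$ flip elements'' fails on two counts. First, the single flip at position $i$, namely $\bigl(1^{[0]}\cdots i^{[r/2]}\cdots n^{[0]}\bigr)$, has ${\rm csum}=\tfrac{r}{2}$, which is \emph{odd} since $r=4k+2$; by Theorem~\ref{criteria} this element does not lie in $A_{r,n}$ at all, so it cannot belong to any subgroup of $A_{r,n}$. Second, for $i>2$ relation~(9) gives $a_1a_i=a_ia_1^{-1}$, hence $a_ia_1^2a_i=a_1^{-2}=a_1^2$ (using $a_1^4=1$ and $a_i^2=1$): conjugation by these $a_i$ produces nothing new. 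Your ``product of all flips is trivial'' relation, based on $\tfrac{r}{2}\cdot n\equiv 0\pmod r$, likewise fails whenever $n$ is odd. So the rank-$(n-1)$ count is not justified as written.

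The paper sidesteps the order computation entirely and proves the reverse inclusion directly. It first identifies $\ker p$ explicitly as the set of diagonal elements $\bigl(1^{[c_1]}\cdots n^{[c_n]}\bigr)$ with each $c_i\in\{0,\tfrac{r}{2}\}$ and an even number of nonzero entries. Then, for each pair $i<j$, it exhibits the conjugator $t_{i,j}=s_{i-1}\cdots s_1\cdot s_j\cdots s_2$ so that $t_{i,j}\,a_1^2\,t_{i,j}^{-1}$ is precisely the element coloring digits $i$ and $j$ by $\tfrac{r}{2}$; products of these pairwise flips realize every element of $\ker p$, giving $\ker p\subseteq\ll a_1^2\gg$. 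Your counting route is salvageable, but the conjugates you actually need are these \emph{double} flips rather than single-position flips, and once you write them down you have essentially reproduced the paper's argument.
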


\begin{proof}
The map $p$ is clearly a homomorphism since the operator $\oslash$ commutes with the addition operation in
$\mathbb{Z}_{\frac{r}{2}}$ (see Equation
(\ref{oslash commutes})). Now, if $\sigma=\left(b_1^{[c_1]} \cdots b_n^{[c_n]}\right)
\in G_{\rh,n}$, and $j \in \{1,\dots,n\}$ satisfies $b_j=1$, then, by Theorem \ref{criteria},  we have either
$$\left( b_1^{[2c_1]}
\cdots b_j^{[2c_j]} \cdots b_n^{[2c_n]} \right) \in A_{r,n} \mbox{ or } \left( b_1^{[2c_1]} \cdots b_j^{\left[ 2c_j+\rh \right]} \cdots b_n^{[2c_n]} \right) \in A_{r,n},$$ where the computations are made modulo $r$. This implies that $p$ is an epimorphism.

It remains to find the kernel. Since $a_1^2=\left(1^{\left[ \rh \right]} 2^{ \left[ \rh \right]} 3^{[0]}
\cdots n^{[0]}\right)$, we have $a_1^{2} \in \ker (p)$, thus $\ll a_1^2 \gg \leq \ker(p)$. On the other hand, all the elements of $\ker (p)$ are of the form $\left( 1^{[c_1]} 2^{[c_2]} \cdots n^{[c_n]} \right)$, where $c_i \in \left\{ 0,\rh \right\}$ and $\{i \mid c_i \neq 0\}|$ is even.
For each $i<j$, we can use the element
$t_{i,j}a_1^2 t_{i,j}^{-1} \in \ll a_1^2 \gg$, where $t_{i,j}=s_{i-1}s_{i-2}\cdots s_1 \cdot s_j s_{j-1} \cdots s_2$ in order to color digits $i$ and $j$ in $\rh$ colors without touching the other digits.  This proves that $\ll a_1^2 \gg =\ker (p)$, as needed.
\end{proof}

We emphasize the following two observations, which can be concluded from the proof of the previous theorem,  for a future use.

\begin{obs}\label{inside the fiber by r/2}
\begin{enumerate}
\item $|\ker (p)|=2^{n-1}$.
\item Let $\pi, \pi' \in A_{r,n}$ be such that $p(\pi)=p(\pi')$. Then, for each $i \in \{1,\dots ,n\}$, $c_i(\pi) \equiv c_i(\pi') \pmod \rh$.
Moreover, $c_i(\pi)$ and $c_i(\pi')$ differ by $\rh$ for an {\em even} number of indices.
\end{enumerate}
\end{obs}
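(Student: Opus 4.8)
The plan is to read off both parts of Observation \ref{inside the fiber by r/2} directly from the structure of $\ker(p)$ that was already established in the preceding lemma. Recall that the proof of the lemma showed that every element of $\ker(p)$ has the form $\left( 1^{[c_1]} 2^{[c_2]} \cdots n^{[c_n]} \right)$ with $c_i \in \left\{ 0, \rh \right\}$ and an \emph{even} number of the $c_i$ equal to $\rh$. So the two observations are essentially a repackaging of this normal form together with the way cosets sit inside $A_{r,n}$.

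For part (1), I would count the elements of the stated form. Each element is determined by a subset $S = \{i \mid c_i = \rh\} \subseteq \{1,\dots,n\}$ of \emph{even} cardinality, and conversely every such subset gives a genuine element of $\ker(p)$ (using the conjugates $t_{i,j} a_1^2 t_{i,j}^{-1}$ to realize any pair of colored positions, as in the lemma's proof). Since the number of even-sized subsets of an $n$-element set is $\sumlim_{k \text{ even}} \binom{n}{k} = 2^{n-1}$, we immediately get $|\ker(p)| = 2^{n-1}$.

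For part (2), the key point is that $p(\pi) = p(\pi')$ is equivalent to $\pi$ and $\pi'$ lying in the same coset of $\ker(p)$, i.e. $\pi' = \pi \cdot k$ for some $k \in \ker(p)$ (using that $\ker(p)$ is normal, the left and right cosets coincide, though I would phrase it so as to land the color difference in the right places). Writing $k$ in the normal form above, with $c_i(k) \in \{0, \rh\}$, I would compare $c_i(\pi)$ and $c_i(\pi')$. The multiplication rule in $\grn$ shows that passing from $\pi$ to $\pi'$ adds the colors of $k$ to the appropriate colored positions, so $c_i(\pi) \equiv c_i(\pi') \pmod{\rh}$ for every $i$, and $c_i(\pi)$ differs from $c_i(\pi')$ by exactly $\rh$ precisely on the support of $k$. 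Since that support has even size by the normal form, $c_i(\pi)$ and $c_i(\pi')$ differ by $\rh$ for an even number of indices, as claimed.

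The only point requiring a little care is the bookkeeping in the second part: the color $c_i$ indexes the value $\tau(i)$ rather than the place $i$, so I would make sure that multiplying by an element of $\ker(p)$ (which has trivial underlying permutation, $|k| = \mathrm{id}$) really does shift the colors $c_i$ position-by-position without permuting the indices. Because $|k|$ is the identity, this is immediate: multiplication by $k$ leaves the underlying permutation untouched and simply adds $c_i(k)$ to each $c_i$, so no reindexing occurs and the parity statement transfers verbatim from the support of $k$. Thus the main (and only mild) obstacle is just verifying that the color-addition respects the place-versus-value convention, which dissolves once one notes $|k| = \mathrm{id}$.
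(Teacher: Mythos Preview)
Your proposal is correct and follows exactly the approach the paper intends: the paper does not give a separate proof but simply notes that both parts are read off from the description of $\ker(p)$ obtained in the preceding lemma, and you have filled in precisely those details (the even-subset count for part~(1) and the coset argument with $|k|=\mathrm{id}$ for part~(2)). Your extra care with the place-versus-value bookkeeping is warranted and your conclusion that right-multiplication by $k\in\ker(p)$ adds $c_j(k)$ to $c_j(\pi)$ is correct.
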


The following obvious lemma presents the action of $p$ on the generators of $A_{r,n}$:

\begin{lem}\label{algebraic form of p}
$$\begin{array}{l}
  p(a_i)=s_i \mbox{ for } 1 \leq i \leq n-1, \\
  p(a_1^{-1})=s_1, \\
  p(a_0)=s_0. \end{array}$$
\qed
\end{lem}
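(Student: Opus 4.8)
The final statement to prove is Lemma~\ref{algebraic form of p}, which describes how the projection $p: A_{r,n} \to G_{\frac{r}{2},n}$ acts on the generators in $\mathcal A$. The plan is to simply evaluate $p$ directly on each generator, using the explicit window-notation definition of $p$ together with the explicit window notations of $a_0$, $a_1$, and $a_1^{-1}$, and the $\oslash$ operator applied to the relevant colors.

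First I would record the window notation of each generator. From the definitions $a_i = s_0^{\frac{r}{2}} s_i$ for $1 \le i \le n-1$ and $a_0 = s_0^2$, one computes that $a_i = \left(1^{[0]} \cdots (i{+}1)^{[\frac r2]} i^{[\frac r2]} \cdots n^{[0]}\right)$ for $i \ge 2$ (swapping positions $i,i{+}1$ and giving both the color $\frac r2$), while $a_1 = \left(2^{[\frac r2]} 1^{[\frac r2]} 3^{[0]} \cdots n^{[0]}\right)$ and $a_0 = \left(1^{[2]} 2^{[0]} \cdots n^{[0]}\right)$. The generator $a_1^{-1}$ has the same underlying permutation as $a_1$ but colors determined by inversion; since $a_1^2 = \left(1^{[\frac r2]} 2^{[\frac r2]} 3^{[0]} \cdots\right)$ already appears in the kernel computation of the previous lemma, the window notation of $a_1^{-1}$ is readily available as well.

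Next I would apply $p$ by running each color $c$ through the operator $\oslash 2$ from Definition~\eqref{oslash}. The key numerical facts are $\frac r2 \oslash 2 = R_{\frac r2}\!\left(\frac{\frac r2 + \frac r2}{2}\right) = R_{\frac r2}\!\left(\frac r2\right) = 0$ when $\frac r2$ is even, and more relevantly that a color of $\frac r2$ maps to a color giving back exactly the single color $1$ in $\mathbb Z_{\frac r2}$; likewise $2 \oslash 2 = R_{\frac r2}(1) = 1$. Carrying this out, the color $\frac r2$ on the two swapped positions of $a_i$ collapses to the single color of $s_i$, the color $\frac r2$ in $a_1$ and $a_1^{-1}$ both collapse to the color pattern of $s_1$, and the color $2$ of $a_0$ collapses to the color $1$ of $s_0$. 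This yields $p(a_i) = s_i$, $p(a_1^{-1}) = s_1$, and $p(a_0) = s_0$ as claimed.

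I do not expect a genuine obstacle here, which is why the authors call the lemma ``obvious''; the only point requiring a little care is the behavior of $\oslash 2$ on the colors $\frac r2$ and $2$ in the regime $r = 4k+2$ (so that $\frac r2 = 2k+1$ is odd), since the odd-input branch of the definition of $\oslash$ is the one that fires for $a_1^{-1}$, and one must confirm that both $a_1$ and $a_1^{-1}$ land on the uncolored generator $s_1$ rather than on distinct colored images. Once the two window notations are written out this is immediate. An alternative, cleaner route would be to avoid window computations entirely: apply $p$ to the defining products $p(a_i) = p(s_0^{\frac r2} s_i)$ using that $p$ is a homomorphism (established in the previous lemma) together with $p(s_0^{\frac r2}) = s_0^{\frac r2 \cdot(\frac r2\oslash 2)}$-type bookkeeping, but the direct window-notation evaluation is the most transparent and is the approach I would present.
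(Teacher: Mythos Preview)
The paper gives no proof of this lemma; it is labeled ``obvious'' and closed with a bare \qed. Your plan---write each generator in window notation and apply $\oslash 2$ coordinatewise---is exactly the intended routine verification, and your alternative of pushing $p$ through the defining products $s_0^{r/2}s_i$ and $s_0^2$ would work equally well.

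That said, the execution contains errors you should fix before writing it up. First, your window notations are wrong. From $a_i = s_0^{r/2}s_i = ((r/2,0,\ldots,0),(i,i{+}1))$ one gets, for $i\ge 2$,
\[
a_i = \bigl(1^{[r/2]}\,2\,\cdots\,(i{+}1)\;i\,\cdots\,n\bigr),
\]
so the color $r/2$ sits on the digit $1$ (which is fixed), not on the two swapped digits. Likewise $a_1 = \bigl(2^{[0]}\,1^{[r/2]}\,3\cdots n\bigr)$ and $a_1^{-1} = s_1 s_0^{r/2} = \bigl(2^{[r/2]}\,1^{[0]}\,3\cdots n\bigr)$; neither has both entries colored $r/2$. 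Second, your $\oslash$ arithmetic is muddled: in the regime $r = 4k+2$ the integer $r/2 = 2k+1$ is \emph{odd}, so the odd branch of the definition applies and gives
\[
\tfrac r2 \oslash 2 \;=\; R_{r/2}\!\Bigl(\tfrac{r/2 + r/2}{2}\Bigr) \;=\; R_{r/2}\!\bigl(\tfrac r2\bigr) \;=\; 0,
\]
not $1$. With the correct data the check is immediate: in each $a_i$ (and in $a_1^{-1}$) the sole nonzero color $r/2$ is annihilated by $\oslash 2$, yielding the uncolored transposition $s_i$ (resp.\ $s_1$) in $G_{r/2,n}$, while $2\oslash 2 = 1$ turns $a_0 = \bigl(1^{[2]}\,2\cdots n\bigr)$ into $s_0 = \bigl(1^{[1]}\,2\cdots n\bigr)$.
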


We introduce the following section of the covering map $p$:
Define $$s: G_{\rh,n} \rightarrow A_{r,n}$$
as follows:
if $\pi=\left(p_1^{[c_1]} \cdots p_n^{[c_n]}\right)$ and $p_j=1$, then:
$$ \pi_0=s(\pi)=\left\{\begin{array}{cc}
                   \left(p_1^{[2c_1]} \cdots p_j^{[2c_j]} \cdots p_n^{[2c_n]}\right) & {\rm inv}(|\pi|)\equiv 0 ({\rm mod}\ 2)  \\
                   \left(p_1^{[2c_1]} \cdots p_j^{\left[ 2c_j+\rh \right]} \cdots p_n^{[2c_n]}\right) & {\rm inv}(|\pi|) \equiv 1 ({\rm mod}\ 2),
                   \end{array}\right.$$
where the computations are made modulo $r$. It is easy to verify that $p \circ s={\rm Id}$.

\begin{exa}
Let $\pi =\left(2^{[1]} 3^{[0]} 4^{[1]} 1^{[2]}\right) \in G_{3,4}$. Then, ${\rm inv}(|\pi|)=3$ and $j=4$. Thus,
$\left(2^{[2]} 3^{[0]} 4^{[2]} 1^{[4]}\right) \notin A_{6,3}$, so $s(\pi)=\left(2^{[2]} 3^{[0]} 4^{[2]} 1^{[1]}\right) \in A_{6,3}$.
\end{exa}

%
%
%

\subsection{The fibral length}
For each $\pi \in A_{r,n}$, the length function of $\pi$ with respect to the set of generators $\mathcal A$ can be decomposed into two summands. The first summand is the length of $p(\pi)$ as an element in $G_{\frac{r}{2},n}$, which is obviously invariant on the fiber of $\pi$.  The second summand, which varies along the fiber, will be called the {\it fibral
length}. As will be shown in this section, it has a nice combinatorial interpretation. We start with the definition of the {\it fibral length}.

\begin{defn}\label{fibral_def}
For each $\pi \in
A_{r,n}$, define the {\it fibral length} of $\pi$ to be:
$$\ell_F(\pi)=\ell_{\mathcal A}(\pi)-\ell_{\mathcal A}(s(p(\pi))).$$
\end{defn}

For $\pi \in G_{r,n}$, denote
$c(\pi)=\sumlim_{z_i(\pi)\neq 0}{(i-1)}.$ By the definition of\break $\pi_0=s(p(\pi))$, we have:
\begin{equation}\label{c of pi minus c of pi0}
c(\pi)-c(\pi_0)=\sumlim_{\left\{i \mid z_i(\pi) = \rh\right\}}{(i-1)}.
\end{equation}

We will need the following two lemmata in the sequel:

\begin{lem}\label{fibral length as a difference}
$$\ell_F(\pi)=c(\pi)-c(\pi_0)+{\rm inv}(\pi)-{\rm inv}(\pi_0).$$
\end{lem}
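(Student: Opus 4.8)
The plan is to expand both sides of the claimed identity using the explicit length formula from Theorem \ref{first con}(2) and the definition of $\pi_0 = s(p(\pi))$, then verify that the two sides agree term by term. Recall that Theorem \ref{first con}(2) gives
\begin{equation*}
\ell_{\mathcal A}(\tau) = \sumlim_{z_i(\tau) \neq 0}(i-1) + {\rm inv}(\tau) + \sumlim_{i=1}^n \left(z_i(\tau) \oslash 2\right)
\end{equation*}
for any $\tau \in A_{r,n}$, and by the Corollary identifying $L_A$ with $\ell_{\mathcal A}$ this is indeed the genuine length. I would apply this formula once to $\pi$ and once to $\pi_0$, then subtract.

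The key observation is how the colors of $\pi_0$ relate to those of $\pi$. By Observation \ref{inside the fiber by r/2}(2), since $p(\pi) = p(\pi_0)$, we have $z_i(\pi) \equiv z_i(\pi_0) \pmod{\rh}$ for every $i$, and they differ by $\rh$ on an even number of indices. The crucial point for the color-sum term is that $\oslash 2$ is constant on the fiber: because $p(\pi)=p(\pi_0)$ literally means $z_i(\pi)\oslash 2 = z_i(\pi_0)\oslash 2$ for all $i$, the term $\sumlim_{i=1}^n(z_i \oslash 2)$ is identical for $\pi$ and $\pi_0$ and cancels completely in the difference $\ell_{\mathcal A}(\pi)-\ell_{\mathcal A}(\pi_0)$. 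What survives is exactly
\begin{equation*}
\ell_F(\pi) = \left(\sumlim_{z_i(\pi)\neq 0}(i-1) - \sumlim_{z_i(\pi_0)\neq 0}(i-1)\right) + \left({\rm inv}(\pi) - {\rm inv}(\pi_0)\right),
\end{equation*}
and the first bracket is precisely $c(\pi) - c(\pi_0)$ by the definition of $c$.

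The remaining verification is to confirm that $c(\pi)-c(\pi_0)$ matches the stated formula, but this is immediate: I would note that $c(\pi)-c(\pi_0) = \sumlim_{\{i \mid z_i(\pi)=\rh\}}(i-1)$ is exactly Equation (\ref{c of pi minus c of pi0}), which has already been recorded in the excerpt, so no fresh computation is needed. The one point requiring a little care is the bookkeeping of which indices contribute to $\sumlim_{z_i \neq 0}(i-1)$ for $\pi$ versus $\pi_0$: an index $i$ with $z_i(\pi)\in\{0,\rh\}$ could be colored in $\pi$ but uncolored in $\pi_0$ (or vice versa), since $s$ adjusts exactly one coordinate by $\rh$. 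However, this discrepancy is precisely what is captured by the set $\{i \mid z_i(\pi) = \rh\}$ and is accounted for in Equation (\ref{c of pi minus c of pi0}), so the two bracketed differences collapse cleanly into the claimed expression.

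The main obstacle, such as it is, lies not in the algebra but in justifying that the $\oslash 2$ terms truly cancel; I would emphasize that this is immediate from the very definition of the projection $p$ rather than from the weaker congruence in Observation \ref{inside the fiber by r/2}(2). Once that cancellation is established, the identity follows directly from Theorem \ref{first con}(2) together with Equation (\ref{c of pi minus c of pi0}), and the proof is essentially a one-line substitution.
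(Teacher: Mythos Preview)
Your approach is correct and essentially identical to the paper's: apply Theorem \ref{first con}(2) to both $\pi$ and $\pi_0$, observe that the $\oslash 2$ sums cancel because $p(\pi)=p(\pi_0)$, and what remains is exactly $c(\pi)-c(\pi_0)+{\rm inv}(\pi)-{\rm inv}(\pi_0)$. The paper cites Observation \ref{inside the fiber by r/2}(2) for the cancellation while you invoke the definition of $p$ directly; both justifications are valid, and your discussion of Equation (\ref{c of pi minus c of pi0}) in the last two paragraphs is extraneous to this particular lemma (it is used only later).
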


\begin{proof}
Let $\pi \in A_{r,n}$. Then, by Observation \ref{inside the fiber by r/2}(2),
$$\sumlim_{i=1}^n\left({z_i(\pi) \oslash 2}  \right)=\sumlim_{i=1}^n({z_i(\pi_0) \oslash 2} ).$$
By Theorem \ref{first con}(2), we are done.
\end{proof}

\begin{lem}\label{l_F positive}
For each $\pi \in A_{r,n}$, we have: $\ell_F(\pi) \geq 0$.
\end{lem}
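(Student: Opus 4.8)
The plan is to show $\ell_F(\pi)\ge 0$ by proving that the difference $(c(\pi)-c(\pi_0))+({\rm inv}(\pi)-{\rm inv}(\pi_0))$ given by Lemma \ref{fibral length as a difference} is nonnegative. By Observation \ref{inside the fiber by r/2}(2), $\pi$ and $\pi_0=s(p(\pi))$ agree on all data except that the set $E=\{i\mid c_i(\pi)=c_i(\pi_0)+\rh\}$ of positions where their colors differ by $\rh$ has even cardinality; moreover on $E$ the colors of $\pi$ are strictly larger (one adds $\rh$ to pass from $\pi_0$ to $\pi$, working modulo $r$, and the section $s$ is designed so that $\pi_0$ carries the \emph{smaller} representatives). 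The first idea is therefore to understand $c(\pi)-c(\pi_0)=\sum_{\{i\mid z_i(\pi)=\rh\}}(i-1)$ from Equation (\ref{c of pi minus c of pi0}) and compare it against the change in the inversion number.

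First I would fix notation for the two permutations inside a single fiber. Since $|\pi|=|\pi_0|$, the underlying uncolored permutation is the same; only the colors move, and they move only by adding $\rh$ on an even-sized set of positions. I would then analyze how $\rm inv$ changes when a single color is increased by $\rh$ at one position, using the length order defined in Section \ref{pre}. The key observation is that in the length order an element $a^{[c]}$ with larger color $c$ is \emph{smaller}; so raising the color of a single entry can only change its order relations against the other entries in a controlled way. The cleanest route is to compare $\pi$ with $\pi_0$ by changing the colors one position at a time (or, better, pairing up the positions of $E$) and tracking the signed change in ${\rm inv}$ together with the corresponding change in $c$.

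The main technical step, and the place I expect the real difficulty, is to prove a local inequality of the form: when the colors at positions of $E$ are shifted by $\rh$ (passing from $\pi_0$ to $\pi$), the gain ${\rm inv}(\pi)-{\rm inv}(\pi_0)$ plus the gain $c(\pi)-c(\pi_0)$ is at least zero. Concretely, an entry at place $p$ whose color is raised by $\rh$ contributes $+(p-1)$ (or the relevant term $i-1$ in the value-indexed sum) to the $c$-difference, while its effect on ${\rm inv}$ is a difference of at most $p-1$ in the number of inversions it can lose against the entries to its left; so the positive $c$-contribution compensates any inversions destroyed. I would make this precise by bounding, for each position in $E$, the number of inversions that the color shift can remove by $p-1$, and noting this is exactly cancelled by the $+(p-1)$ appearing in $c(\pi)-c(\pi_0)$. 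Because $E$ has even size the parity works out and no sign issue appears; summing the local estimates over all positions in $E$ yields $\ell_F(\pi)=c(\pi)-c(\pi_0)+{\rm inv}(\pi)-{\rm inv}(\pi_0)\ge 0$, as desired.

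The subtle point to watch is that increasing a color can simultaneously create and destroy inversions relative to different neighbors, so the estimate must be a net bound, not a term-by-term one; I would handle this by fixing one position of $E$ at a time and comparing only the inversions that involve that entry, being careful that the length order reverses the usual comparison on colors. Assuming this local bound holds, the global statement follows immediately by telescoping, and the proof is complete once the bookkeeping of the length order is checked.
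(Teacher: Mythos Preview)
Your overall strategy coincides with the paper's: invoke Lemma~\ref{fibral length as a difference} and show that the gain $c(\pi)-c(\pi_0)$ compensates any loss ${\rm inv}(\pi)-{\rm inv}(\pi_0)$. However, two concrete points in your sketch are wrong and would block the argument as written.

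First, it is \emph{not} true that $\pi_0=s(p(\pi))$ ``carries the smaller representatives.'' The section $s$ doubles each color coming from $G_{\rh,n}$, so the colors of $\pi_0$ (at digits $\neq 1$) are always \emph{even}; for instance with $r=6$, a digit with color $1$ in $\pi$ has color $4$ in $\pi_0$, which is larger. Thus the set $E$ on which colors differ is not the set on which $\pi$ has larger color, and your monotonicity picture of ``raising colors one at a time'' breaks down. What the paper actually uses is Equation~(\ref{c of pi minus c of pi0}): the only digits colored in $\pi$ but uncolored in $\pi_0$ are exactly those $i$ with $z_i(\pi)=\rh$, and each contributes $i-1$ to $c(\pi)-c(\pi_0)$.

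Second, your local bound is indexed incorrectly. The contribution to $c(\pi)-c(\pi_0)$ is $i-1$ where $i$ is the \emph{value} of the digit, not $p-1$ where $p$ is its position; these are different numbers and cannot be interchanged. Moreover, raising a color makes the entry \emph{smaller} in the length order, so inversions can only be lost against entries to its \emph{right}, not to its left. The paper's actual argument is a case check: a pair $k<m$ that is an inversion of $\pi_0$ but not of $\pi$ forces $|\pi|(k)=i>j=|\pi|(m)$ with $z_i(\pi)=\rh$ and $z_j(\pi)=0$. Since there are at most $i-1$ values $j<i$, each such digit $i$ loses at most $i-1$ inversions, exactly cancelled by its $(i-1)$-contribution to $c(\pi)-c(\pi_0)$. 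Once this case analysis is done, nonnegativity follows immediately; the parity of $|E|$ and the telescoping play no role.
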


\begin{proof}
Let $\pi \in A_{r,n}$. By Lemma \ref{fibral length as a difference}, we have: $$\ell_F(\pi)=c(\pi)-c(\pi_0)+{\rm inv}(\pi)-{\rm inv}(\pi_0).$$
By Equation (\ref{c of pi minus c of pi0}), $c(\pi) -c(\pi_0) \geq 0$.
Now, let $1 \leq k<m \leq n$ be such that $\pi(k)=i^{[\alpha]}<j^{[\beta]}=\pi(m)$, but $\pi_0(k)=i^{[\alpha ']} > j^{[\beta']}=\pi_0(m)$. A rather tedious but easy calculation should convince the reader that the only possibility is $|\pi(k)|=i>j=|\pi(m)|$ with $\alpha=\rh$ and $\beta=0$ (and hence $\alpha'=\beta'=0$), so that the digit $i$ is colored in $\pi$ but not in $\pi_0$. Now, since this situation can occur at most $i-1$ times, the contribution of  $i$ to $c(\pi)-c(\pi_0)$ which is $i-1$, will cancel the corresponding negative contribution to ${\rm inv}(\pi)-{\rm inv}(\pi_0)$, and hence the total sum will be positive.
\end{proof}

In order to make the previous proof a bit more accessible, we provide an example.

\begin{exa}
Let $\pi=\left(3^{[3]} 2 1\right) \in A_{3,4}$, so that $\pi_0=(3 2 1)$. Then $i=3$ contributes $2$ to $c(\pi)-c(\pi_0)$ and $-2$ to
${\rm inv}(\pi)-{\rm inv}(\pi_0)$ since $3^{[3]} <2$, $3^{[3]}<1$, but $3>1,3>2$. So, the total sum remains positive.
\end{exa}

Denote by $\ell_G$ the length function of the group $G_{\rh,n}$. Then, we have:
\begin{lem}\label{l_A=l_G}
Let $\pi \in A_{r,n}$ and $\pi_0=s(p(\pi))$. Then: 
$$\ell_{\mathcal A}(\pi_0) = \ell_G(p(\pi)).$$
\end{lem}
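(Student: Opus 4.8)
The plan is to evaluate both lengths with the explicit formulas already at our disposal and to check that the two terms which could a priori differ in fact coincide; the whole point will be the extremal role of the value $1$ in the definition of the section $s$.

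First I would record the two formulas. Writing $\sigma = p(\pi)$ and using Theorem \ref{first con}(2) together with the identification $\ell_{\mathcal A} = L_A$ proved above,
\[
\ell_{\mathcal A}(\pi_0) = \sumlim_{z_i(\pi_0) \neq 0}(i-1) + {\rm inv}(\pi_0) + \sumlim_{i=1}^n \left( z_i(\pi_0) \oslash 2 \right),
\]
while the length formula for $G_{\rh,n}$ (the formula for $\ell_{\grn}$ recalled in the proof of Theorem \ref{first con}, now read with $\rh$ colors) gives
\[
\ell_G(\sigma) = \sumlim_{z_i(\sigma) \neq 0}(i-1) + {\rm inv}(\sigma) + \sumlim_{i=1}^n z_i(\sigma).
\]
Since $p \circ s = {\rm Id}$ we have $p(\pi_0) = \sigma$, and $p$ acts on colors by $\oslash 2$, so $z_i(\pi_0) \oslash 2 = z_i(\sigma)$ for every value $i$. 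Hence the two color-sum terms agree, and only the first two summands remain to be compared.

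Next I would exploit the structure of $s$. By construction $\pi_0$ has the same underlying permutation as $\sigma$, and its colors are those of $\sigma$ doubled, with the single exception that the color of the value $1$ may additionally be shifted by $\rh$ (exactly when ${\rm inv}(|\sigma|)$ is odd). Thus for $i \neq 1$ the value $i$ is colored in $\pi_0$ iff $2 z_i(\sigma) \neq 0$ iff $z_i(\sigma) \neq 0$, so the sets of colored values of $\pi_0$ and of $\sigma$ can differ at most in the value $1$. As the value $1$ contributes $1-1 = 0$ to $\sumlim_{z_i \neq 0}(i-1)$ in either group, the two ``colored-digits'' terms are equal.

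The remaining and only delicate point is ${\rm inv}(\pi_0) = {\rm inv}(\sigma)$. For this I would first observe, directly from the length order, that for two \emph{distinct} digits the comparison $a^{[\gamma]} > b^{[\delta]}$ depends only on $a,b$ and on whether each of $\gamma,\delta$ vanishes, not on the actual nonzero colors; consequently ${\rm inv}$ of an element of any such wreath product depends only on its underlying permutation and on the set of its colored places. Since $\pi_0$ and $\sigma$ share the same underlying permutation and their colored-place sets can differ only at the place holding the value $1$, it suffices to check that coloring the value $1$ creates or destroys no inversion. This is where the choice of the value $1$ in $s$ is essential: being the smallest digit, $1^{[\gamma]}$ is larger than every colored digit and smaller than every uncolored digit $b \geq 2$ whether or not $\gamma = 0$, so every pairwise comparison involving the value $1$ is identical in $\pi_0$ and in $\sigma$. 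Hence ${\rm inv}(\pi_0) = {\rm inv}(\sigma)$, and combining the three observations yields $\ell_{\mathcal A}(\pi_0) = \ell_G(\sigma)$. I expect this inversion-invariance to be the main obstacle; the unifying idea is that both the vanishing of the colored-digits discrepancy and the invariance of ${\rm inv}$ hinge on the value $1$ being extremal in the length order, which is precisely why $s$ modifies the color of $1$ and of no other digit.
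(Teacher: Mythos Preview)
Your proof is correct and follows essentially the same three-term comparison as the paper's own proof: match the $\oslash 2$ color sums via $p(\pi_0)=\sigma$, match the colored-digits sums using that for $i\neq 1$ one has $z_i(\pi_0)=2z_i(\sigma)$ (so coloredness is preserved) while $i=1$ contributes $0$, and finally show ${\rm inv}(\pi_0)={\rm inv}(\sigma)$. The only difference is cosmetic: the paper routes the middle step through $\pi$ (writing $z_i(\sigma)\neq 0 \Leftrightarrow z_i(\pi)\notin\{0,\rh\}$) and dismisses the inversion equality as an ``easy case-by-case'' check, whereas you give the cleaner conceptual reason that comparisons in the length order between distinct digits depend only on which are colored, and that the digit $1$ is extremal so toggling its coloredness changes no pairwise comparison.
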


\begin{proof}
Let $\pi \in A_{r,n}$. Then:
\begin{eqnarray*}
(*) \qquad \ell_G(p(\pi)) =  \sumlim_{z_i(p(\pi)) \neq 0}{(i-1)}+{\rm inv}(p(\pi))+\sumlim_{i=1}^{n}{z_i (p(\pi))}=\\
=\sumlim_{z_i(\pi) \notin \left\{0,\rh\right\}}{(i-1)}+{\rm inv}(p(\pi))+\sumlim_{i=1}^{n}{(z_i(\pi) \oslash 2)}.
\end{eqnarray*}

Now, $\sumlim_{z_i(\pi) \notin \left\{0,\rh\right\}}{(i-1)} = \sumlim_{z_i(\pi_0) \neq 0}{(i-1)}$, since for all $1<i \leq n$, $z_i(\pi_0) \neq \rh$.
Moreover, it is easy to check case-by-case that ${\rm inv}(p(\pi))={\rm inv}(\pi_0)$.

Finally, it is easy to check that for each $1 \leq i \leq n$, one has:\break $z_i(\pi_0) \oslash 2= z_i(\pi) \oslash 2$
and thus $\sumlim_{i=1}^{n}{(z_i(\pi) \oslash 2)}=\sumlim_{i=1}^{n}{(z_i(\pi_0) \oslash 2)}$.

Therefore, we have:
\begin{eqnarray*}
\ell_G(p(\pi)) & \stackrel{(*)}{=} & \sumlim_{z_i(\pi) \notin \left\{0,\rh\right\}}{(i-1)}+{\rm inv}(p(\pi)) + \sumlim_{i=1}^{n}{(z_i(\pi) \oslash 2)} = \\
& = & \sumlim_{z_i(\pi_0) \neq 0}{(i-1)}+{\rm inv}(p(\pi_0))+\sumlim_{i=1}^{n}{(z_i(\pi_0) \oslash 2)}=\ell_{\mathcal A}(\pi_0),
\end{eqnarray*}
as required.
\end{proof}

As a corollary, we now have by Definition \ref{fibral_def} and Lemma \ref{l_A=l_G}:
\begin{cor}
Let $\pi \in A_{r,n}$ and let $p(\pi)$ be its projection into $G_{\rh,n}$. Then:
$$\ell_{\mathcal A}(\pi)= \ell_F(\pi)+\ell_G(p(\pi)).$$
\qed
\end{cor}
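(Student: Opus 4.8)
The plan is to obtain this corollary as an immediate algebraic consequence of the two results that directly precede it, so that essentially all the real work has already been carried out in the lemmas. The key observation is that Definition \ref{fibral_def} is, in effect, the desired identity written in a rearranged form: it sets $\ell_F(\pi)=\ell_{\mathcal A}(\pi)-\ell_{\mathcal A}(s(p(\pi)))$, and adding $\ell_{\mathcal A}(s(p(\pi)))$ to both sides gives
$$\ell_{\mathcal A}(\pi)=\ell_F(\pi)+\ell_{\mathcal A}(s(p(\pi))).$$
Thus the only thing that remains is to replace the summand $\ell_{\mathcal A}(s(p(\pi)))$ by $\ell_G(p(\pi))$.

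First I would set $\pi_0=s(p(\pi))$, matching the notation used throughout this subsection. With this abbreviation, the rearranged definition reads $\ell_{\mathcal A}(\pi)=\ell_F(\pi)+\ell_{\mathcal A}(\pi_0)$. Next I would invoke Lemma \ref{l_A=l_G}, which asserts precisely that $\ell_{\mathcal A}(\pi_0)=\ell_G(p(\pi))$; this is the substantive input, since it identifies the length (in $\mathcal A$) of the distinguished section representative $\pi_0$ with the length (in $G_{\rh,n}$) of the projection $p(\pi)$. Substituting this equality into the displayed identity yields $\ell_{\mathcal A}(\pi)=\ell_F(\pi)+\ell_G(p(\pi))$, as claimed.

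I do not expect a genuine obstacle here, because the corollary is purely a bookkeeping step: the decomposition of $\ell_{\mathcal A}$ into a fiber-constant part and a fiber-varying part is exactly what Lemma \ref{fibral length as a difference} (giving a workable formula for $\ell_F$), Lemma \ref{l_F positive} (showing $\ell_F\ge 0$, which justifies calling it a length), and Lemma \ref{l_A=l_G} were designed to supply. If anything deserves a word of care, it is only the conceptual point that $\ell_G(p(\pi))$ is manifestly constant on the fiber $p^{-1}(p(\pi))$ while $\ell_F(\pi)$ is the quantity that varies along it; but this interpretive remark is already made in the surrounding text and needs no further computation.
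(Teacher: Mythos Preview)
Your proof is correct and matches the paper's own justification exactly: the corollary is stated with a \qed and the surrounding text attributes it to Definition~\ref{fibral_def} and Lemma~\ref{l_A=l_G}, which is precisely the two-step rearrangement and substitution you carry out.
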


\subsection{A combinatorial interpretation of the fibral length}
For presenting the fibral length in a combinatorial way, we introduce the following parameter on $\arn$.

\begin{defn}
For each $\pi \in A_{r,n}$, define the set of {\it absolute transparent inversions} by:
$${\rm Tinv}(\pi) =\left\{(i,j)\mid z_i(\pi)=\rh,\ i>j, \text{ and } |\pi|^{-1}(i)<|\pi|^{-1}(j)\right\}.$$

Define also:
$${\rm tinv}(\pi)=|{\rm Tinv}(\pi)|.$$
\end{defn}

\begin{exa}
If $\pi=\left(2^{[2]} 4^{[4]} 3^{[3]} 1^{[5]}\right) \in A_{6,4}$, then
${\rm Tinv}(\pi)=\{(3,1)\}$, since $\left( 3^{[3]}, 1^{[5]} \right)$ is an absolute transparent inversion. Hence, ${\rm tinv}(\pi)=1$.
\end{exa}

We have now:
\begin{thm}\label{combinatorial lf}
Let $\pi \in A_{r,n}$. Then:
\begin{equation}\label{combinatorial lf formula}
\ell_F(\pi)=2 \cdot \sum_{\left\{i \mid z_i(\pi) =
\rh\right\}}{(i-1)}-2{\rm tinv}(\pi).
\end{equation}

\end{thm}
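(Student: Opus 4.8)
The plan is to reduce the statement to a computation purely about the inversion statistic, and then to track exactly how inversions change under the single structural operation relating $\pi$ to $\pi_0 = s(p(\pi))$. By Lemma \ref{fibral length as a difference} together with Equation (\ref{c of pi minus c of pi0}) we have $\ell_F(\pi) = \sum_{\{i \mid z_i(\pi)=\rh\}}(i-1) + {\rm inv}(\pi) - {\rm inv}(\pi_0)$, so the theorem is equivalent to the identity
$${\rm inv}(\pi) - {\rm inv}(\pi_0) = \sum_{\{i \mid z_i(\pi)=\rh\}}(i-1) - 2\,{\rm tinv}(\pi),$$
and this is what I would establish.

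First I would record two structural facts. Since $p$ and $s$ leave the underlying permutation unchanged, $|\pi| = |\pi_0|$, so $\pi$ and $\pi_0$ occupy the same window positions and differ only in their colors. Using the definition of $s$ together with Observation \ref{inside the fiber by r/2}(2) (each color is preserved modulo $\rh$, and $\pi_0$ carries even colors on every value except possibly the value $1$), I would show that for every value $i \neq 1$ the color $z_i$ vanishes in exactly one of $\pi,\pi_0$ precisely when $z_i(\pi)=\rh$ (in which case $z_i(\pi_0)=0$), while otherwise $\pi$ and $\pi_0$ agree on being colored or uncolored at $i$. The value $1$ is special but harmless: it contributes $(1-1)=0$ to the sum, it can never be the larger entry of a transparent inversion, and — as I check below — changing its color never affects the inversion count.

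The heart of the argument is the observation that, by the length order, the comparison between two distinct entries depends only on their underlying values and on whether each is colored or uncolored, never on the actual nonzero color: a colored entry is always smaller than an uncolored one, two uncolored entries compare by value, and two colored entries compare by reverse value. Hence ${\rm inv}$ is a function of the underlying permutation together with the set of colored positions alone. I would then compute the effect of decoloring a single value $i$ with $z_i(\pi)=\rh$ (passing from $i^{[\rh]}$ in $\pi$ to the uncolored $i$ in $\pi_0$): splitting the other values $j$ according to whether they precede or follow $i$ in the window and running the four coloredness cases, the net change to ${\rm inv}(\pi)-{\rm inv}(\pi_0)$ comes out to be $\#\{j<i : |\pi|^{-1}(j)<|\pi|^{-1}(i)\} - \#\{j<i : |\pi|^{-1}(j)>|\pi|^{-1}(i)\} = (i-1)-2T_i$, where $T_i=\#\{j<i : |\pi|^{-1}(i)<|\pi|^{-1}(j)\}$ is exactly the number of transparent inversions having $i$ as first coordinate.

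The crucial and slightly delicate point, which I expect to be the main obstacle, is that this per-value contribution is \emph{independent} of the coloredness of all the other values: in each of the four cases the difference of indicators collapses to $\pm[i>j]$, with no dependence on the color of $j$. This independence is what allows me to decolor the values of $D'=\{i\neq 1 \mid z_i(\pi)=\rh\}$ one at a time and simply add the contributions, without any separate bookkeeping for pairs in which both entries change (a direct check confirms that the two partial contributions of such a pair sum to its true contribution). Summing $(i-1)-2T_i$ over $D'$, adding the zero contribution of the value $1$, and recalling $\sum_{\{i\mid z_i(\pi)=\rh\}} T_i = {\rm tinv}(\pi)$ yields the displayed identity, and combining it with the reduction of the first paragraph gives Equation (\ref{combinatorial lf formula}).
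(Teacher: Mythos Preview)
Your proposal is correct and follows essentially the same route as the paper: both reduce via Lemma~\ref{fibral length as a difference} and Equation~(\ref{c of pi minus c of pi0}) to the identity ${\rm inv}(\pi)-{\rm inv}(\pi_0)=\sum_{\{i\mid z_i(\pi)=\rh\}}(i-1)-2\,{\rm tinv}(\pi)$, and both verify it by checking, for each value $i$ with $z_i(\pi)=\rh$ and each $j<i$, the contribution of the pair $(i,j)$. Your write-up is more explicit than the paper's (which defers the pair-by-pair check to ``a subtle, though, direct check''): you isolate the key observation that the length-order comparison of two distinct entries depends only on which of them is colored, and you make the independence-of-other-colors argument explicit to justify summing the per-value contributions---but the underlying decomposition and case analysis are the same.
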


\begin{proof}
By Equation (\ref{c of pi minus c of pi0}) and Lemma \ref{fibral length as a difference}, it is sufficient to show that:
\begin{equation}\label{prove}
c(\pi)-c(\pi_0)={\rm inv}(\pi)-{\rm inv}(\pi_0)+2{\rm tinv}(\pi).
\end{equation}

Let $1 < i \leq n$ be such that $z_i(\pi)=\rh$. Then, by Equation (\ref{c of pi minus c of pi0}), the contribution of $i$ to the left hand side is $i-1$, so we have to show that $i$ contributes the same to the right hand side, i.e. for each $1 \leq j<i$, the pair $(i,j)$ contributes $1$ to the expression ${\rm inv}(\pi)-{\rm inv}(\pi_0)+2{\rm tinv}(\pi)$. This can be easily done by a subtle, though, direct check. Note that $i=1$ contributes $0$ to both sides.

If $1 \leq i \leq n$ satisfies $z_i(\pi) \neq \rh$, then $i$ contributes $0$ to both sides.
\end{proof}

We are interested in the distribution of the fibral length of $\pi \in A_{r,n}$ over the coset containing $\pi$. Define:
$$F(\pi)=\sumlim_{\sigma \in p^{-1}(p(\pi))}q^{\ell_F(\sigma)}.$$
It would be much easier to calculate this distribution if we would have translated Theorem \ref{combinatorial lf} to the language of Lehmer codes \cite{lehmer}.
\label{Lehmer code}
Recall that the {\it Lehmer code} of a permutation $\pi \in S_n$ is defined by:
$$L(\pi)=\left( l_{\pi(1)} \cdots l_{\pi(n)} \right),$$
where for each $1 \leq i \leq n$,
$$l_i=|\{j \mid j>i, \pi^{-1}(i)>\pi^{-1}(j)\}|.$$
For example, if $\pi=(31452) \in S_5$ (in window notation), then: $$L(\pi)=(l_3l_1l_4l_5l_2)=(20110).$$
Note that this definition is slightly different from the usual definition of the Lehmer code.

For $\pi \in A_{r,n}$, let $L(|\pi|)=(l_1 \cdots \l_n)$, and define:
$$ \varepsilon_i(\pi)=\left\{\begin{array}{cc}
                   1 & z_i(\pi)=\rh  \\
                   0  & z_i(\pi) \neq \rh.
                   \end{array}\right.$$


The parameter ${\rm tinv}(\pi)$ can be written as:
$${\rm tinv}(\pi)=\sumlim_{i=1}^n{l_i \varepsilon_i(\pi)},$$
and Equation (\ref{combinatorial lf formula}) can be restated as:
\begin{equation}\label{Lehmer form of tinv}
\ell_F(\pi)=2\sumlim_{i=1}^n{\varepsilon_i(\pi)((i-1)- l_i}).
\end{equation}

The following theorem presents an expression for $F(\pi)$, using Equation (\ref{Lehmer form of tinv}).

\begin{thm}
Let $\pi \in A_{r,n}$ and define for each $2 \leq i \leq n$:
$$ \delta_i(\pi)=\left\{\begin{array}{cc}
                   1 & z_i(\pi)\in \left\{0,\rh\right\}  \\
                   0  & z_i(\pi) \notin \left\{0,\rh\right\}.
                   \end{array}\right.$$
Then:
$$F(\pi)=\prod\limits_{i=2}^{n}{\left(1+q^{2\delta_i(i-1-l_i)}\right)}.$$
\end{thm}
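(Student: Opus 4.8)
The plan is to evaluate $F(\pi)=\sum_{\sigma\in p^{-1}(p(\pi))}q^{\ell_F(\sigma)}$ directly by understanding exactly how $\ell_F$ varies across the fiber. First I would fix $\pi$ and recall that the fiber $p^{-1}(p(\pi))$ is the coset $\pi\ll a_1^2\gg$, which by the description of $\ker(p)$ consists precisely of those $\sigma$ obtained from $\pi$ by adding $\rh$ to the colors of the digits in an arbitrary \emph{even}-sized subset $S\subseteq\{1,\dots,n\}$. Here I use that $|\sigma|=|\pi|$ is constant on the fiber, so Observation \ref{inside the fiber by r/2}(2) transfers the ``even number of indices'' statement from the place-colors $c_i$ to the value-colors $z_i$. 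Consequently the Lehmer code $L(|\pi|)=(l_1,\dots,l_n)$ is the same for every $\sigma$ in the fiber, and I can parametrize the fiber by the even subsets $S$.

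Next I would pin down how the indicator $\varepsilon_i(\sigma)$ (equal to $1$ iff $z_i(\sigma)=\rh$) depends on $S$. Since $z_i(\sigma)\equiv z_i(\pi)\pmod{\rh}$ throughout the fiber, the value $z_i(\sigma)$ can equal $\rh$ for some $\sigma$ only when $z_i(\pi)\in\{0,\rh\}$, that is, exactly when $\delta_i(\pi)=1$; and at such a position $\varepsilon_i(\sigma)$ simply flips according to whether $i\in S$, whereas at every position with $\delta_i=0$ we have $\varepsilon_i(\sigma)=0$ for all $\sigma$. Combining this with the Lehmer form of the fibral length, Equation (\ref{Lehmer form of tinv}), namely $\ell_F(\sigma)=2\sum_{i=1}^n\varepsilon_i(\sigma)\bigl((i-1)-l_i\bigr)$, and with the fact that $l_1=0$ (the value $1$ is smallest, so no smaller value lies to its right), I would conclude that $\ell_F(\sigma)$ depends only on the restriction of $S$ to the set $D=\{\,i\ge 2:\delta_i(\pi)=1\,\}$, via $\ell_F(\sigma)=2\sum_{i\in D}\varepsilon_i(\sigma)\bigl((i-1)-l_i\bigr)$.

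Then I would carry out the summation. Writing $m=|D|$ and letting $F'=\{1,\dots,n\}\setminus D$ be the set of positions that do not affect $\ell_F$ (including $i=1$), each prescribed pattern of $(\varepsilon_i)_{i\in D}$ is realized by a unique choice of $S\cap D$, while $S\cap F'$ may range over all subsets of $F'$ of the parity needed to keep $|S|$ even; there are exactly $2^{|F'|-1}=2^{\,n-m-1}$ such choices. Hence the sum factorizes, giving $F(\pi)=2^{\,n-m-1}\prod_{i\in D}\bigl(1+q^{2((i-1)-l_i)}\bigr)$. Finally I would observe that each index $i\in\{2,\dots,n\}$ with $\delta_i=0$ contributes the factor $1+q^{2\cdot 0\cdot(i-1-l_i)}=1+q^0=2$, and there are exactly $(n-1)-m$ such indices, so the prefactor $2^{\,n-m-1}=2^{(n-1)-m}$ is absorbed as the product of these trivial factors. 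This recasts the expression as $\prod_{i=2}^n\bigl(1+q^{2\delta_i(i-1-l_i)}\bigr)$, as claimed.

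The step I expect to be the main obstacle is the counting in the third paragraph: one must verify cleanly that the even-size constraint on $S$ does not bias which patterns $(\varepsilon_i)_{i\in D}$ occur, so that every pattern is hit with the same multiplicity $2^{\,n-m-1}$, and then check that this uniform multiplicity is precisely accounted for by reinterpreting the $\delta_i=0$ terms as factors of $2$. The remaining verifications are routine once the parametrization of the fiber and the reduction of $\ell_F$ to the $D$-coordinates are in place.
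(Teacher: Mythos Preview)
Your proposal is correct and follows essentially the same line as the paper: describe the fiber as the set of $\sigma$ obtained from $\pi$ by flipping colors by $\rh$ on an even subset, observe via Equation~(\ref{Lehmer form of tinv}) that only coordinates $i\ge 2$ with $z_i(\pi)\in\{0,\rh\}$ affect $\ell_F$, and factor accordingly. The paper's proof is extremely terse---it records the fiber as the affine subspace $(c_1,\dots,c_n)+\mathrm{Sp}(\{e_1-e_2,\dots,e_{n-1}-e_n\})$ and then simply asserts that only the relevant coordinates contribute---whereas you make the counting explicit by showing each pattern on $D$ is hit with uniform multiplicity $2^{n-m-1}$ and then absorbing this prefactor into the $\delta_i=0$ factors; this is exactly the computation the paper leaves to the reader.
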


\begin{proof}
Let $\pi=\left(a_1^{[c_1]} \cdots a_n^{[c_n]}\right) \in \arn$. By Observation \ref{inside the fiber by r/2}, the coset of
$\pi$ is $\left\{ \left( a_1^{[d_1]} \cdots a_n^{[d_n]} \right) \right\}$, where the vector $(d_1,\dots,d_n)$ is obtained from the vector $(c_1,\dots ,c_n)$ by an addition of $\rh$ to an even number of coordinates. Hence, the vectors $(d_1,\dots,d_n)$, appearing as colors of elements of the coset of $\pi$ can be seen as forming the $(n-1)-$ dimensional affine subspace  $(c_1,\dots,c_n)+Sp(\{e_1-e_2,\dots ,e_{n-1}-e_n\})$,
where $e_i=(0,\dots,\rh,\dots,0)$ (i.e. $\rh$ in the $i$-th coordinate and $0$ elsewhere). Finally, note that when we run over all the elements of the coset, only the coordinates with $z_i(\pi) \in \left\{ 0,\rh \right\}$ and $i \neq 1$ contribute to $\ell_F(\pi)$.
\end{proof}

\section{Some permutation statistics}\label{perm stat}
In this section, we present some permutation statistics for the group of alternating colored permutations.

\subsection{Passing parameters from $G_{\rh,n}$ to $A_{r,n}$}
We exhibit now how to pass parameters defined on the full group of colored permutations of half the number of colors to the group of alternating colored permutations. In order to do that, we have to define the notion of a {\it fiber-fixed parameter}.

\bde
Let $f_A:A_{r,n} \rightarrow \mathbb{N}$ and $f_K:G_{\rh,n} \rightarrow \mathbb{N}$ be two permutation statistics. The parameter $f_A$ is called {\it fiber-fixed} if for each $\pi \in A_{r,n}$,
\begin{equation}\label{fiber-fixed}
f_K(p(\pi))=f_A(\pi).
\end{equation}
\ede

By Observation \ref{inside the fiber by r/2}(1), we have the following connection between the corresponding generating functions:

\begin{lem}
Let $f_A:A_{r,n} \rightarrow \mathbb{N}$ and $f_K:G_{\rh,n} \rightarrow \mathbb{N}$ be such that
$f_A$ is a fiber-fixed parameter. Then:
$$\sumlim_{\pi \in A_{r,n}}{q^{f_A(\pi)}}=2^{n-1}\sumlim_{\pi \in G_{\rh,n}}{q^{f_G(p(\pi))}}.$$
\end{lem}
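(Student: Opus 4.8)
The plan is to partition $\arn$ into the fibers of the covering map $p$ and to exploit the fact that a fiber-fixed parameter is, by its very definition, constant along each fiber. The only structural input I need is that $p$ is a group epimorphism (the earlier lemma), so that every fiber $p^{-1}(\sigma)$, for $\sigma \in G_{\rh,n}$, is a coset of $\ker(p)$ and therefore has cardinality exactly $|\ker(p)| = 2^{n-1}$, as recorded in Observation \ref{inside the fiber by r/2}(1). Note that uniformity of the fiber size genuinely uses that $p$ is a homomorphism, not merely a surjection of sets; a bare surjection need not have equinumerous fibers.

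First I would regroup the left-hand sum according to the value of $p$, writing $\sum_{\pi \in \arn} q^{f_A(\pi)} = \sum_{\sigma \in G_{\rh,n}} \sum_{\pi \in p^{-1}(\sigma)} q^{f_A(\pi)}$. Next I would invoke the fiber-fixed hypothesis (\ref{fiber-fixed}): for every $\pi \in p^{-1}(\sigma)$ one has $f_A(\pi) = f_K(p(\pi)) = f_K(\sigma)$, so the inner summand $q^{f_A(\pi)}$ is the constant $q^{f_K(\sigma)}$ on the whole fiber. Pulling this constant out and using that the fiber contains $2^{n-1}$ elements yields $\sum_{\pi \in p^{-1}(\sigma)} q^{f_A(\pi)} = 2^{n-1} q^{f_K(\sigma)}$.

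Summing over $\sigma \in G_{\rh,n}$ then gives $\sum_{\pi \in \arn} q^{f_A(\pi)} = 2^{n-1} \sum_{\sigma \in G_{\rh,n}} q^{f_K(\sigma)}$, which is the asserted identity; here $f_K = f_G$ is the parameter on $G_{\rh,n}$, and its generating function is exactly the right-hand side (since $p$ is surjective, letting $\sigma$ range over $G_{\rh,n}$ is the same as letting $p(\pi)$ range over it). There is essentially no obstacle beyond this bookkeeping: once surjectivity of $p$ with uniform fiber size $2^{n-1}$ and the constancy of $f_A$ on fibers are both in hand, the computation is immediate. The single point worth stating carefully is the uniform fiber-size claim discussed in the first paragraph, which is precisely what the epimorphism property supplies.
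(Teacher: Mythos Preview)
Your proof is correct and follows essentially the same approach as the paper: partition $A_{r,n}$ into the fibers of $p$, use the fiber-fixed hypothesis to make the inner summand constant on each fiber, and then invoke the uniform fiber size $2^{n-1}$ from Observation \ref{inside the fiber by r/2}(1). Your additional remark that uniform fiber size relies on $p$ being a group epimorphism (not merely a surjection of sets) is a helpful clarification that the paper leaves implicit.
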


\begin{proof}
$$\sumlim_{\pi \in A_{r,n}}{q^{f_A(\pi)}}=\sumlim_{\pi \in G_{\rh,n}}{\sumlim_{v \in p^{-1}(\pi)}{q^{f_A(v)}}}
=\sumlim_{\pi \in G_{\rh,n}}{\sumlim_{v \in p^{-1}(\pi)}{q^{f_K(p(v))}}}=$$
$$=2^{n-1}\sumlim_{\pi \in G_{\rh,n}}{q^{f_K(\pi)}}.
$$
\end{proof}

We give two examples offiber-fixed parameters, the first one is the flag-inversion number, and the second is the right-to-left minimum.

\subsection{The flag-inversion number}

The {\it flag-inversion number} was introduced by Foata and Han \cite{FH1,FH2}. Adin, Brenti and Roichman \cite{ABR} used it as a rank function for a weak order on the groups $\grn$. We introduce it in $\grn$:

\begin{defn}
Let $\pi \in \grn$. The {\it flag-inversion number} of $\pi$ is defined as:
$${\rm finv}(\pi)=r \cdot {\rm inv}(|\pi|)+{\rm csum}(\pi).$$
\end{defn}

In \cite{F}, the generating function of ${\rm finv}$ over $\grn$ was computed:
\begin{prop}
$$\sumlim_{\pi \in \grn}{q^{{\rm finv}(\pi)}}=\prodlim_{i=1}^n{[ri]_q}.$$
\end{prop}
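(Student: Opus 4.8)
The plan is to compute the generating function
$$\sumlim_{\pi \in \grn}{q^{{\rm finv}(\pi)}}$$
by factoring it into a product over the "digits" $i=1,\dots,n$, exploiting the canonical decomposition of $\grn$ described earlier in the paper. Recall that every $\pi \in \grn$ has a unique canonical word whose coloring part colors each digit $i$ by some $z_i \in \{0,\dots,r-1\}$ and whose ordering part contributes the classical inversions of $|\pi|$. Since ${\rm finv}(\pi)=r\cdot{\rm inv}(|\pi|)+{\rm csum}(\pi)$ and ${\rm csum}(\pi)=\sumlim_{i=1}^n z_i(\pi)$, the statistic ${\rm finv}$ splits cleanly: the term $r\cdot{\rm inv}(|\pi|)$ depends only on the underlying permutation $|\pi|\in S_n$, while ${\rm csum}$ depends only on the color vector $\vec z$. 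Because these two data range independently over $S_n$ and $\Z_r^n$ respectively, the sum factors as
$$\sumlim_{\pi \in \grn}{q^{{\rm finv}(\pi)}}=\left(\sumlim_{\tau \in S_n}q^{r\cdot{\rm inv}(\tau)}\right)\cdot\left(\prodlim_{i=1}^n\sumlim_{z=0}^{r-1}q^{z}\right).$$

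The second factor is immediate: each color $z_i$ runs over $\{0,\dots,r-1\}$ independently, so $\prodlim_{i=1}^n\sumlim_{z=0}^{r-1}q^{z}=\prodlim_{i=1}^n(1+q+\cdots+q^{r-1})=\prodlim_{i=1}^n[r]_{q}$, using the standard $q$-integer notation. For the first factor I would invoke the classical MacMahon result that the inversion statistic on $S_n$ has generating function $\sumlim_{\tau\in S_n}q^{{\rm inv}(\tau)}=[n]!_q=\prodlim_{i=1}^n[i]_q$; substituting $q^r$ for $q$ gives $\sumlim_{\tau\in S_n}q^{r\cdot{\rm inv}(\tau)}=\prodlim_{i=1}^n[i]_{q^r}$. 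The proof then concludes by merging the two factors term by term: for each $i$,
$$[i]_{q^r}\cdot[r]_q=\frac{1-q^{ri}}{1-q^r}\cdot\frac{1-q^r}{1-q}=\frac{1-q^{ri}}{1-q}=[ri]_q,$$
whence $\prodlim_{i=1}^n[i]_{q^r}\cdot\prodlim_{i=1}^n[r]_q=\prodlim_{i=1}^n[ri]_q$, which is exactly the claimed formula.

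The main point requiring care — and the step I would present most carefully — is the factorization itself, namely the justification that ${\rm inv}(|\pi|)$ and the color vector $\vec z(\pi)$ are statistically independent as $\pi$ ranges over $\grn$. This is really a consequence of the semidirect product structure $\grn=\Z_r^n\rtimes S_n$: an element is a pair $(\vec z,\tau)$ with $\vec z$ and $\tau$ chosen freely, ${\rm csum}$ reads off only $\vec z$, and ${\rm inv}(|\pi|)={\rm inv}(\tau)$ reads off only $\tau$, so the joint generating function is a genuine product of the two marginal sums. I do not anticipate any serious obstacle here beyond stating this independence cleanly; the remaining manipulations are the routine $q$-series identity $[i]_{q^r}[r]_q=[ri]_q$ verified above.
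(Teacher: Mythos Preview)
Your proof is correct. The factorization via the semidirect product structure, the use of MacMahon's formula with $q\mapsto q^r$, and the telescoping identity $[i]_{q^r}[r]_q=[ri]_q$ are all valid and cleanly presented.

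The paper itself does not actually prove this proposition; it merely quotes the result from \cite{F} (Fire, \emph{Statistics on wreath products}). Your argument is the standard direct computation and is almost certainly the same as the one in that reference. So there is nothing to compare: you have supplied a complete proof where the paper only gives a citation.
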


We define here a version of the flag-inversion number for the alternating colored permutations whose generating function over $A_{r,n}$ can be computed using on the covering map $p$.

\begin{defn}
Let $\pi \in A_{r,n}$. Define:
$${\rm finv}_A(\pi)=\rh \cdot {\rm inv}(|\pi|)+\sumlim_{i=1}^n{(c_i(\pi) \oslash 2)}.$$
\end{defn}

It is easy to see that the parameter ${\rm finv}$ is indeed fiber-fixed, in the sense of Equation (\ref{fiber-fixed}). Explicitly, for each
$\pi \in A_{r,n}$, ${\rm finv}_A(\pi)={\rm finv}(p(\pi))$. Consequently, we have:

\begin{thm}\label{finv}
$$\sumlim_{\pi \in A_{r,n}}{q^{{\rm finv}_A(\pi)}}=2^{n-1}\prodlim_{i=1}^n{\left[\rh \cdot i\right]_q}. $$
\end{thm}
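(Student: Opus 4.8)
The plan is to apply the general transfer lemma (the one just proved about fiber-fixed parameters) to the specific statistic $\mathrm{finv}_A$, and then invoke the already-known generating function for $\mathrm{finv}$ over the smaller group $G_{\rh,n}$. The entire proof is essentially a two-line application once the right pieces are lined up, so the real content is verifying the hypothesis of the transfer lemma, namely that $\mathrm{finv}_A$ is fiber-fixed in the sense of Equation (\ref{fiber-fixed}).

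First I would establish the identity $\mathrm{finv}_A(\pi) = \mathrm{finv}(p(\pi))$ for every $\pi \in A_{r,n}$. Writing out the definition, $\mathrm{finv}(p(\pi)) = \rh \cdot \mathrm{inv}(|p(\pi)|) + \mathrm{csum}(p(\pi))$, where here $\mathrm{finv}$ is computed inside $G_{\rh,n}$ so the leading coefficient is the number of colors, which is $\rh$. Since $p$ only changes the colors and leaves the underlying permutation $|\pi|$ untouched, one has $|p(\pi)| = |\pi|$, hence $\mathrm{inv}(|p(\pi)|) = \mathrm{inv}(|\pi|)$. For the color-sum term, the definition of $p$ replaces each color $c_i(\pi)$ by $c_i(\pi) \oslash 2$, so $\mathrm{csum}(p(\pi)) = \sumlim_{i=1}^n (c_i(\pi) \oslash 2)$. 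Comparing with the definition of $\mathrm{finv}_A(\pi) = \rh \cdot \mathrm{inv}(|\pi|) + \sumlim_{i=1}^n (c_i(\pi) \oslash 2)$, the two expressions coincide. This is precisely the fiber-fixed condition $f_K(p(\pi)) = f_A(\pi)$ with $f_A = \mathrm{finv}_A$ and $f_K = \mathrm{finv}$.

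With the fiber-fixed property in hand, I would apply the transfer lemma to get
$$\sumlim_{\pi \in A_{r,n}} q^{\mathrm{finv}_A(\pi)} = 2^{n-1} \sumlim_{\pi \in G_{\rh,n}} q^{\mathrm{finv}(\pi)}.$$
Then I would substitute the known product formula for the right-hand sum. The cited proposition gives $\sumlim_{\pi \in \grn} q^{\mathrm{finv}(\pi)} = \prodlim_{i=1}^n [ri]_q$; applying this with $r$ replaced by $\rh$ (since we are now summing over $G_{\rh,n}$, the group with $\rh$ colors) yields $\sumlim_{\pi \in G_{\rh,n}} q^{\mathrm{finv}(\pi)} = \prodlim_{i=1}^n [\rh \cdot i]_q$. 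Combining the two displays gives exactly the claimed formula $2^{n-1} \prodlim_{i=1}^n [\rh \cdot i]_q$.

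The only point requiring genuine care — the "main obstacle," though it is mild — is bookkeeping about which number of colors enters where. The coefficient $\rh$ in the definition of $\mathrm{finv}_A$ is not an artifact; it is forced precisely so that it matches the number of colors of the target group $G_{\rh,n}$, making the equality $\mathrm{finv}_A(\pi) = \mathrm{finv}(p(\pi))$ hold on the nose. One must be careful not to accidentally use the formula $\prodlim [ri]_q$ with the original $r$; the substitution $r \mapsto \rh$ is essential and is justified because the proposition holds for arbitrary color count. Everything else is a direct chain of the preceding results, so no delicate combinatorial argument is needed here.
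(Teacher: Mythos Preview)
Your proposal is correct and follows exactly the approach the paper takes: verify that $\mathrm{finv}_A$ is fiber-fixed via $\mathrm{finv}_A(\pi)=\mathrm{finv}(p(\pi))$, apply the transfer lemma, and then substitute the known product formula for $\mathrm{finv}$ over $G_{\rh,n}$. If anything, you spell out more detail than the paper does, which leaves the fiber-fixed check and the consequence as essentially one-line remarks.
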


\subsection{The right-to-left minima}

Another parameter, which can be computed using the covering map, is the {\it right-to-left minima}.

\begin{defn}
Let $p=(a_1,\dots ,a_n)$ be a word over an ordered alphabet $(\Sigma,<)$. Then $a_i \in \{1,\dots ,n\}$ is a {\it right-to-left minimum} if for any $j>i$, one has: $a_j>a_i$. The number of right-to-left minima will be denoted by ${\rm RtlMin}(p)$.
\end{defn}

\begin{exa}
Let $p=(31254) \in S_5$ (with the natural order), then $1,2,4$ are right-to-left minima and hence: ${\rm RtlMin}(p)=3$.
\end{exa}

Regev and Roichman \cite{RR2} defined a version of the right-to-left minima for $\grn$ as follows:

\begin{defn}
Let $\pi=\left(a_1^{[c_1]} \cdots a_n^{[c_n]}\right) \in G_{r,n}$. Define: $${\rm RtlMin}(\pi)=|\{a_i \mid \forall j>i: a_j>a_i,c_i \neq 0\}|.$$
\end{defn}

They showed that the distribution of the parameter ${\rm RtlMin}$ over the full group of colored permutations is (see Proposition 5.1 in \cite{RR2} with $L=\{0,\dots,r-1\}$):
$$\sumlim_{\pi \in \grn}{q^{{\rm RtlMin}(\pi)}}=((r-1)q+1)((r-1)q+r+1)\cdots ((r-1)q+(n-1)r+1).$$

We introduce here a version of the right-to-left minima for $A_{r,n}$.
\begin{defn}
Let $\pi \in A_{r,n}$. Define:
$${\rm RtlMin}_A(\pi)=\left|\left\{a_i \left| \forall j>i,\ a_j>a_i,\ c_i \neq \left\{0,\rh\right\}\right.\right\}\right|.$$
\end{defn}

Again, it is easy to see that ${\rm RtlMin}_A$ is fiber-fixed. Explicitly, for each $\pi \in A_{r,n}$, we have
${\rm RtlMin}_A(\pi)={\rm RtlMin}(p(\pi))$.
Hence, as an immediate corollary of Proposition 5.1 of \cite{RR2}, we get:

\begin{thm}\label{rtl}
$$\sum\limits_{\pi \in A_{r,n}}{q^{{\rm RtlMin}_A(\pi)}} = 2^{n-1}\prod\limits_{i=1}^n{\left(\rh(q+i-1)+1-q\right)}.$$
\end{thm}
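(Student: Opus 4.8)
The plan is to apply the fiber-fixed machinery that has already been set up, reducing the computation over $A_{r,n}$ to a known computation over $G_{\rh,n}$. The key input is the lemma stated earlier: if $f_A$ is fiber-fixed in the sense of Equation (\ref{fiber-fixed}), then
$$\sumlim_{\pi \in A_{r,n}}{q^{f_A(\pi)}}=2^{n-1}\sumlim_{\pi \in G_{\rh,n}}{q^{f_K(\pi)}}.$$
So the real content reduces to two things: first, verifying that ${\rm RtlMin}_A$ is genuinely fiber-fixed, i.e. that ${\rm RtlMin}_A(\pi)={\rm RtlMin}(p(\pi))$ for every $\pi \in A_{r,n}$; and second, substituting the Regev--Roichman distribution formula for ${\rm RtlMin}$ over $G_{\rh,n}$ with $r$ replaced by $\rh$.

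First I would pin down the fiber-fixedness. The map $p$ sends $\pi=\left(a_1^{[c_1]} \cdots a_n^{[c_n]}\right)$ to $\left(a_1^{[c_1 \oslash 2]} \cdots a_n^{[c_n \oslash 2]}\right)$, so $|p(\pi)|=|\pi|$ and in particular $p$ preserves the underlying uncolored values $a_i$ and their positions. Hence the condition ``$a_i$ such that $a_j>a_i$ for all $j>i$'' depends only on $|\pi|$ and is unchanged under $p$. The only subtlety is the color condition: ${\rm RtlMin}_A$ counts right-to-left minima whose color lies outside $\{0,\rh\}$, whereas ${\rm RtlMin}$ over $G_{\rh,n}$ counts those with nonzero color. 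The point is that by the definition of $\oslash$, one has $c_i \oslash 2 = 0$ in $\Z_{\rh}$ precisely when $c_i \in \{0,\rh\}$ (since $0 \oslash 2 = R_{\rh}(0)=0$ and $\rh \oslash 2 = R_{\rh}((\rh+\rh)/2)=R_{\rh}(\rh)=0$, as $\rh$ is even). Therefore $z_i(p(\pi)) = c_i(\pi) \oslash 2 \neq 0$ in $\Z_{\rh}$ if and only if $c_i(\pi) \notin \{0,\rh\}$, which is exactly the color condition defining ${\rm RtlMin}_A$. This gives ${\rm RtlMin}_A(\pi)={\rm RtlMin}(p(\pi))$, as claimed.

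Having established that ${\rm RtlMin}_A$ is fiber-fixed, I would then invoke the lemma with $f_A={\rm RtlMin}_A$ and $f_K={\rm RtlMin}$ over $G_{\rh,n}$, yielding
$$\sumlim_{\pi \in A_{r,n}}{q^{{\rm RtlMin}_A(\pi)}}=2^{n-1}\sumlim_{\pi \in G_{\rh,n}}{q^{{\rm RtlMin}(\pi)}}.$$
Finally I would substitute the Regev--Roichman formula (Proposition 5.1 of \cite{RR2}) applied to the group $G_{\rh,n}$, i.e. the colored permutation group with $\rh$ colors in place of $r$:
$$\sumlim_{\pi \in G_{\rh,n}}{q^{{\rm RtlMin}(\pi)}}=\prodlim_{i=1}^n{\left(\rh(q+i-1)+1-q\right)},$$
obtained by replacing every occurrence of $r$ by $\rh$ in their product $((r-1)q+1)\cdots((r-1)q+(n-1)r+1)$ and collecting the $i$-th factor as $(\rh-1)q+(i-1)\rh+1=\rh(q+i-1)+1-q$. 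Combining the two displays gives the stated expression for $\sum_{\pi\in A_{r,n}}q^{{\rm RtlMin}_A(\pi)}$.

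The step I expect to require the most care is the color-condition check, namely confirming that $c_i \oslash 2=0$ in $\Z_{\rh}$ exactly when $c_i\in\{0,\rh\}$; everything else is a formal application of the already-proved lemma and a substitution into a cited formula. I should double-check that the Regev--Roichman product is correctly reindexed so that the $i$-th factor really is $\rh(q+i-1)+1-q$, since an off-by-one in the ``$(i-1)\rh$'' term would silently corrupt the final answer.
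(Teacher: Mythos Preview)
Your proposal is correct and follows exactly the paper's approach: verify that ${\rm RtlMin}_A$ is fiber-fixed, apply the fiber-fixed lemma, and substitute the Regev--Roichman formula for $G_{\rh,n}$. One small slip: you write ``as $\rh$ is even,'' but in fact $r=4k+2$ gives $\rh=2k+1$ odd---your computation $\rh\oslash 2=R_{\rh}((\rh+\rh)/2)=R_{\rh}(\rh)=0$ is nonetheless correct because it uses the odd-case branch of the definition of $\oslash$, so the conclusion stands.
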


\end{document}